\documentclass[11pt]{amsart}
\usepackage[utf8]{inputenc}
\usepackage{amsmath}
\usepackage{amsfonts}
\usepackage{amssymb}
\usepackage{amsthm}
\usepackage{graphicx}
\usepackage{enumerate}
\usepackage{subcaption}
\usepackage[colorlinks=true,linkcolor=blue,citecolor=blue]{hyperref}
\usepackage{tikz}



\newtheorem{theorem}{Theorem}[section]
\newtheorem{corollary}[theorem]{Corollary}
\newtheorem{lemma}[theorem]{Lemma}

\newtheorem{fact}[theorem]{Fact} 
\theoremstyle{definition}
\newtheorem{definition}[theorem]{Definition}

\newtheorem{remark}[theorem]{Remark}

\numberwithin{equation}{section}

\newcommand{\Fix}{\operatorname{Fix}}
\newcommand{\wto}{\stackrel{w.}{\rightharpoonup}}
\renewcommand{\H}{\mathcal{H}}

\newcommand{\qede}{\hspace*{\fill}$\Diamond$\medskip}
\DeclareMathOperator*{\argmin}{arg\,min}


\title[Cyclic Douglas--Rachford Method]{The Cyclic Douglas--Rachford Method for Inconsistent Feasibility Problems}

\author[J.M. Borwein]{Jonathan M. Borwein}
\address[J.M. Borwein]{CARMA Centre, University of Newcastle, Callaghan, NSW 2308, Australia.}
\email{{\tt jon.borwein@gmail.com}}

\author[M.K. Tam]{Matthew K. Tam}
\address[M.K. Tam]{CARMA Centre, University of Newcastle, Callaghan, NSW 2308, Australia.}
\email{{\tt matthew.k.tam@gmail.com}}

\keywords{cyclic Douglas--Rachford method, convex feasibility problem, projection,reflection}
\subjclass[2010]{47H09, 47H10, 47N10}
\date{\today}

\begin{document}

\begin{abstract}
We analyse the behaviour of the newly introduced cyclic Douglas--Rachford algorithm for finding a point in the intersection of a finite number of closed convex sets. This work considers the case in which the target intersection set is possibly empty.
\end{abstract}

\maketitle

\section{Preliminaries and Notation}
Throughout we assume $\H$ is a (real) \emph{Hilbert space} with inner product $\langle \cdot,\cdot\rangle$ and induced norm $\|\cdot\|$. We use $\to$ (resp. $\wto$) to denote norm (resp. weak) convergence.

We consider the \emph{convex feasibility problem}
 \begin{equation}\label{eq:cfp}
  \text{Find }x\in\bigcap_{i=1}^NC_i,
 \end{equation}
where $C_i$ are closed convex subsets of $\H$. For convenience, we define $C_0:=C_{N}$ and $C_{N+1}:=C_1$.

When the intersection is empty, (\ref{eq:cfp}) is ill-posed.  Instead, we seek an appropriate substitute for a point in the intersection. For example, if $N=2$ it is natural to consider the following variational problem
 \begin{equation}\label{eq:varprob}
  \inf_{(c_1,c_2)\in C_1\times C_2}\|c_1-c_2\|.
 \end{equation}
If the infimum is realised, we call the solution $(c_1,c_2)$ a \emph{best approximation pair} with respect to $(C_1,C_2)$. For $N>2$, an appropriate generalization of (\ref{eq:varprob}) for characterizing the limit cycles of projection methods remains elusive and subtle. For details, see \cite{BCC12}.

\subsection{General Theory}
In this section we recall some general theory regarding general nonexpansive mappings  ---
applied later to projections and reflections --- in Hilbert space.  We give some definitions.

 \begin{definition} Let $T:\H\to\H$.

 \begin{enumerate}[(a)]
  \item $T$ is \emph{nonexpansive} if
   $\|Tx-Ty\|\leq\|x-y\|\text{ for all }x,y\in\H.$
  \item $T$ is \emph{firmly nonexpansive} if
   $$\|Tx-Ty\|^2+\|(I-T)x-(I-T)y\|^2\leq \|x-y\|^2\text{ for all }x,y\in\H.$$
  \item $T$ is \emph{$\lambda$-averaged} if
   $T=(1-\lambda)I+\lambda R,$
  for some nonexpansive mapping $R:\H\to\H$.
  \item $T$ is \emph{demiclosed} if
   $x_n\wto x, Tx_n\to y \implies Tx=y.$
  \item $T$ is \emph{asymptotically regular at $x\in\H$} if
   $(I-T)T^nx\to 0.$
  \item The set of \emph{fixed points} of $T$ is
   $\Fix T:=\{x\in\H:Tx=x\}.$
 \end{enumerate}\end{definition}

We collect facts concerning the interplay between these properties.

\begin{fact}[Nonexpansive properties]\label{fact:nonexpansive}
 The following hold.
 \begin{enumerate}[(a)]
  \item If $T$ is firmly nonexpansive then $T$ is nonexpansive.
  \item If $\lambda\in[0,1]$ and $T$ is $\lambda$-averaged then $T$ is nonexpansive.
  \item $T$ is firmly nonexpansive if and only if $T$ is $1/2$-averaged.
  \item If $T$ is nonexpansive then $I-T$ is demiclosed.
  \item If $\lambda\in]0,1[$ and $T$ is $\lambda$-averaged then, for any $z\in\H$,
   $$\lim_{n\to\infty}\|T^{n+1}z-T^nz\|=\frac{1}{k}\lim_{n\to\infty}\|T^{n+k}z-T^nz\|=\lim_{n\to\infty}\frac{1}{n}\|T^nz\|,$$
 for all $k\geq 1$. In particular, if $(T^nz)_{n=1}^\infty$ is bounded, then $T$ is asymptotically regular at $x$.
 \end{enumerate}
\end{fact}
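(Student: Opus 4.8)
These are standard facts, so I would prove them part by part, with essentially all the content residing in~(e). Parts (a)--(c) are one-line manipulations of the definitions. For (a), dropping the nonnegative term $\|(I-T)x-(I-T)y\|^2$ in the definition of firm nonexpansiveness leaves $\|Tx-Ty\|^2\le\|x-y\|^2$. For (b), write $T=(1-\lambda)I+\lambda R$ with $R$ nonexpansive; then $Tx-Ty=(1-\lambda)(x-y)+\lambda(Rx-Ry)$, so by the triangle inequality $\|Tx-Ty\|\le(1-\lambda)\|x-y\|+\lambda\|Rx-Ry\|\le\|x-y\|$. For (c), put $R:=2T-I$, so that $T=\frac12 I+\frac12 R$; expanding $\|Rx-Ry\|^2=\|2(Tx-Ty)-(x-y)\|^2$ shows $\|Rx-Ry\|\le\|x-y\|$ is equivalent to $\|Tx-Ty\|^2\le\langle Tx-Ty,x-y\rangle$, and this is in turn equivalent to firm nonexpansiveness through the identity $\|x-y\|^2=\|Tx-Ty\|^2+\|(I-T)x-(I-T)y\|^2+2\langle Tx-Ty,(I-T)x-(I-T)y\rangle$.

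Part (d) is the classical demiclosedness principle (Browder). The plan is to first observe that $I-T$ is monotone, since $\langle(I-T)x-(I-T)y,x-y\rangle=\|x-y\|^2-\langle Tx-Ty,x-y\rangle\ge\|x-y\|^2-\|Tx-Ty\|\,\|x-y\|\ge0$; being also Lipschitz and everywhere defined, $I-T$ is maximal monotone. If $x_n\wto x$ and $(I-T)x_n\to y$, then monotonicity gives $\langle(I-T)x_n-(I-T)w,\,x_n-w\rangle\ge0$ for each $w\in\H$; letting $n\to\infty$ (the mixed strong/weak pairings converge) yields $\langle y-(I-T)w,\,x-w\rangle\ge0$ for all $w$, and maximality forces $(I-T)x=y$. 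A route avoiding maximal monotonicity uses instead the asymptotic-center inequality $\liminf_n\|x_n-x\|\le\liminf_n\|x_n-w\|$, valid for $x_n\wto x$, applied to the sequence $(Tx_n)$, which converges weakly to $x-y$.

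For (e), set $d_n:=T^{n+1}z-T^nz$ and $a_n:=\|d_n\|$. Since $d_n=T(T^nz)-T(T^{n-1}z)$ and $T$ is nonexpansive, $(a_n)$ is non-increasing, hence $\alpha:=\lim_n a_n$ exists. Also $n\mapsto\|T^nz-z\|$ is subadditive, because $\|T^{m+n}z-z\|\le\|T^{m+n}z-T^nz\|+\|T^nz-z\|\le\|T^mz-z\|+\|T^nz-z\|$, so Fekete's lemma yields the existence of $\gamma:=\lim_n\frac1n\|T^nz-z\|=\lim_n\frac1n\|T^nz\|$. The triangle inequality together with the Cesàro theorem gives the easy bounds $\limsup_n\frac1k\|T^{n+k}z-T^nz\|\le\alpha$ and $\gamma\le\alpha$. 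For the matching bounds I would use the sharpened inequality available when $T$ is $\lambda$-averaged, $\|Tx-Ty\|^2\le\|x-y\|^2-\frac{1-\lambda}{\lambda}\|(I-T)x-(I-T)y\|^2$, which follows from $\|(1-\lambda)a+\lambda b\|^2=(1-\lambda)\|a\|^2+\lambda\|b\|^2-\lambda(1-\lambda)\|a-b\|^2$ with $a=x-y$, $b=Rx-Ry$. Taking $x=T^{n+1}z$, $y=T^nz$ and telescoping yields $\sum_n\|d_{n+1}-d_n\|^2<\infty$, hence $d_{n+1}-d_n\to0$; therefore, for each fixed $k$, $T^{n+k}z-T^nz=\sum_{j=n}^{n+k-1}d_j=k\,d_n+o(1)$, so $\|T^{n+k}z-T^nz\|\to k\alpha$, which is the first claimed equality.

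The remaining and principal difficulty is the lower bound $\gamma\ge\alpha$: knowing only $d_{n+1}-d_n\to0$ and $\|d_n\|\to\alpha$ is not sufficient, since the directions of the $d_n$ could slowly drift, so one must exploit more of the averaged structure --- either the monotonicity inequality $\langle d_m-d_n,\,T^mz-T^nz\rangle\le0$ for $m\ge n$, or the full convergence theory for averaged (more generally nonexpansive) iterations. I would appeal to the latter (Pazy; Baillon, Bruck and Reich): $\frac1n(T^nz-z)$ converges strongly to $-v$, where $v$ is the minimal-norm element of the closed convex set $\overline{\operatorname{ran}}(I-T)$, while at the same time $a_n=\|(I-T)T^nz\|$ decreases to the minimal displacement $\inf_x\|x-Tx\|=\|v\|$; combining these gives $\gamma=\|v\|=\alpha$. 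Finally, the ``in particular'' is then immediate: if $(T^nz)_n$ is bounded then $\gamma=\lim_n\frac1n\|T^nz\|=0$, so $\alpha=0$, that is, $\|(I-T)T^nz\|=a_n\to0$, which is asymptotic regularity of $T$ at $z$.
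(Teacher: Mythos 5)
Your proposal is correct; note, though, that the paper does not prove this Fact at all --- it simply cites \cite[Ch.~4]{BC11} for (a)--(d) and \cite[Th.~2.1]{BBR78} for (e) --- so any honest attempt is necessarily ``different'' from the paper's. Your arguments for (a)--(c) are the standard identities and are fine; for (d) both of your routes (maximal monotonicity of $I-T$ plus a weak--strong limit, or the Opial/asymptotic-centre argument) are classical and correct. In (e) you genuinely add value beyond the citation: the averaged inequality $\|Tx-Ty\|^2\le\|x-y\|^2-\tfrac{1-\lambda}{\lambda}\|(I-T)x-(I-T)y\|^2$ gives $\sum_n\|d_{n+1}-d_n\|^2<\infty$, hence $d_{n+1}-d_n\to0$ and the first equality $\lim_n\|T^{n+k}z-T^nz\|=k\alpha$ by the telescoping argument --- that part is self-contained and correct. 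You are also right to flag that the remaining inequality $\gamma\ge\alpha$ is the real content and cannot follow from $d_{n+1}-d_n\to0$ alone (a plane rotation shows it fails for merely nonexpansive maps); but for that step you fall back on exactly the Pazy/Baillon--Bruck--Reich ergodic theorem ($\tfrac1nT^nz\to -v$ strongly and $\|(I-T)T^nz\|\downarrow\|v\|$), which is the very result \cite{BBR78} that the paper quotes wholesale. So the net effect is: everything the paper delegates to \cite{BC11} you prove, and the one hard kernel of (e) you, like the paper, take from \cite{BBR78}. The ``in particular'' deduction (bounded orbit $\Rightarrow\gamma=0\Rightarrow\alpha=0$) is correct and quietly fixes the paper's typo of ``at $x$'' for ``at $z$''.
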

\begin{proof}
 For (a)--(d) see, for example, \cite[Ch.~4]{BC11}. For (e), see \cite[Th.~2.1]{BBR78}.
\end{proof}

The following theorem will be useful is establishing convergence of our algorithms.

\begin{theorem}[Weak convergence of iterates]\label{th:averagedcvgt}
 Let $\lambda\in]0,1[$. Suppose $(T_i)_{i=1}^m$ is a family of $\lambda$-averaged mappings from $\H$ to $\H$ such that $\Fix(T_m\dots T_1)\neq\emptyset$. For any $x_0\in\H$ define
  $$x_{n+1}:=(T_m\dots T_1)x_n.$$
 Then $x_n-(T_m\dots T_1)x_n\to 0$ and there exists points $y_1\in\Fix(T_m\dots T_1),\,y_2\in\Fix (T_1T_m\dots T_2),\,\dots,\,y_m\in\Fix(T_{m-1}\dots T_1T_m)$ such that
  \begin{align*}
   x_n&\wto y_1 =T_my_m,\\
   T_1x_n&\wto y_2=T_1y_1, \\
   T_2T_1x_n&\wto y_{3}=T_2y_2,\\
    &\;\;\vdots  \\
   T_{m-2}\dots T_1x_n&\wto y_{m-1}=T_{m-2}y_{m-2},\\
   T_{m-1}T_{m-2}\dots T_1x_n&\wto y_m=T_{m-1}y_{m-1}.
  \end{align*}
\end{theorem}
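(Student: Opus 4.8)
The plan is to recast the scheme as the orbit of the single operator $T:=T_m\cdots T_1$, to squeeze out of Fej\'er monotonicity a summability estimate strong enough to pin down the limiting displacements (this being exactly where the inconsistent case parts company with the classical one), and then to run a standard demiclosedness argument and propagate its conclusion along the partial orbits $T_k\cdots T_1x_n$.

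\emph{Step 1: a telescoping estimate.} Fix $f\in\Fix T$ and put $f_0:=f$ and $f_k:=T_k\cdots T_1f$ for $1\le k\le m$, so $f_m=Tf=f=f_0$. For $n\ge 0$ set $z_n^{(0)}:=x_n$ and $z_n^{(k)}:=T_k\cdots T_1x_n$ for $1\le k\le m$; thus $z_n^{(m)}=x_{n+1}$ and $z_n^{(k)}=T_kz_n^{(k-1)}$. Writing $T_k=(1-\lambda)I+\lambda R_k$ with $R_k$ nonexpansive and expanding $\|\cdot\|^2$ gives the standard inequality
\[
\|T_kx-T_ky\|^2\le\|x-y\|^2-\tfrac{1-\lambda}{\lambda}\bigl\|(I-T_k)x-(I-T_k)y\bigr\|^2 .
\]
Apply it with $x=z_n^{(k-1)}$ and $y=f_{k-1}$, note $(I-T_k)f_{k-1}=f_{k-1}-f_k$, and chain over $k=1,\dots,m$ (using $z_n^{(m)}=x_{n+1}$ and $f_m=f$) to obtain
\[
\|x_{n+1}-f\|^2\le\|x_n-f\|^2-\tfrac{1-\lambda}{\lambda}\sum_{k=1}^m\bigl\|(I-T_k)z_n^{(k-1)}-(f_{k-1}-f_k)\bigr\|^2 .
\]
Consequently $(\|x_n-f\|)_n$ is nonincreasing, and summing the above over $n$ yields $\sum_{n\ge 0}\sum_{k=1}^m\|(I-T_k)z_n^{(k-1)}-(f_{k-1}-f_k)\|^2<\infty$. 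Hence, for each $k$, $z_n^{(k-1)}-z_n^{(k)}=(I-T_k)z_n^{(k-1)}\to f_{k-1}-f_k$ in norm; summing these $m$ limits over $k$ (the right-hand sides telescope to $f_0-f_m=0$) gives $x_n-Tx_n=x_n-x_{n+1}\to 0$.

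\emph{Step 2: weak convergence of $x_n$.} As a composition of the nonexpansive maps $T_i$ (Fact~\ref{fact:nonexpansive}(b)), $T$ is nonexpansive, so $\|x_{n+1}-f\|\le\|x_n-f\|$ for all $f\in\Fix T$: $(x_n)$ is Fej\'er monotone with respect to the nonempty set $\Fix T$, hence bounded. If $x_{n_j}\wto y$, then $(I-T)x_{n_j}\to 0$ by Step~1 and $I-T$ is demiclosed (Fact~\ref{fact:nonexpansive}(d)), so $y\in\Fix T$. The standard weak-convergence theorem for Fej\'er monotone sequences (e.g.\ \cite[Ch.~5]{BC11}) now gives a point $y_1\in\Fix T$ with $x_n\wto y_1$.

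\emph{Step 3: propagation, identification, and the main difficulty.} Define $y_k:=T_{k-1}\cdots T_1y_1$ for $1\le k\le m$, so that $y_{k+1}=T_ky_k$ automatically. I claim $z_n^{(k-1)}\wto y_k$ for $1\le k\le m$: this is Step~2 for $k=1$, and if $z_n^{(k-1)}\wto y_k$ then, since $(I-T_k)z_n^{(k-1)}\to f_{k-1}-f_k$ in norm (Step~1) and $I-T_k$ is demiclosed (Fact~\ref{fact:nonexpansive}(b),(d)), we get $(I-T_k)y_k=f_{k-1}-f_k$ and hence $z_n^{(k)}=z_n^{(k-1)}-(I-T_k)z_n^{(k-1)}\wto y_k-(f_{k-1}-f_k)=T_ky_k=y_{k+1}$. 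Taking $k=m$ and using $z_n^{(m)}=x_{n+1}\wto y_1$ forces $y_1=T_my_m$, and since $y_1\in\Fix T$ one checks $(T_k\cdots T_1T_m\cdots T_{k+1})y_{k+1}=T_k\cdots T_1(T_m\cdots T_1y_1)=y_{k+1}$, placing each $y_k$ in the required cyclic fixed-point set. The one genuinely delicate point is the norm convergence $(I-T_k)z_n^{(k-1)}\to f_{k-1}-f_k$ obtained in Step~1: in the consistent case these displacements tend to $0$, but here they tend to the (generally nonzero) gap vectors $f_{k-1}-f_k$, and it is precisely the identifiability of this limit that powers the demiclosedness step; everything else is routine.
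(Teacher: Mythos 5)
Your proof is correct. The paper itself gives no argument here --- it simply cites \cite[Th.~5.22]{BC11} --- so your self-contained derivation is a genuinely different (and more informative) route. Your chain is sound: the inequality $\|T_kx-T_ky\|^2\le\|x-y\|^2-\tfrac{1-\lambda}{\lambda}\|(I-T_k)x-(I-T_k)y\|^2$ is the standard characterization of $\lambda$-averagedness; applying it along the orbit against the fixed-point orbit $f_k=T_k\cdots T_1f$ and telescoping gives the summability of $\sum_k\|(I-T_k)z_n^{(k-1)}-(f_{k-1}-f_k)\|^2$, hence norm convergence of each displacement to the gap vector $f_{k-1}-f_k$ and, after summing over $k$, asymptotic regularity of $T=T_m\cdots T_1$; Fej\'er monotonicity plus demiclosedness of $I-T$ then yields $x_n\wto y_1\in\Fix T$, and the inductive propagation $z_n^{(k)}\wto y_{k+1}=T_ky_k$ is legitimate because you combine a weakly convergent sequence with a norm-convergent correction and invoke demiclosedness of $I-T_k$ in the general form (nonzero limit) that the paper's Fact~\ref{fact:nonexpansive}(d) states. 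The closing identification $y_1=T_my_m$ by uniqueness of weak limits and the verification that each $y_{k+1}$ is fixed by the cyclically permuted composition are both correct. What your approach buys over the citation is precisely the quantitative content the paper exploits later: the norm convergence of the individual displacements $(I-T_k)z_n^{(k-1)}$ to well-defined gap vectors is the mechanism behind the difference-vector limits in Theorem~\ref{th:dichotmy}, and your proof makes that mechanism explicit rather than leaving it buried in the reference.
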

\begin{proof}
 This is a special case of \cite[Th.~5.22]{BC11}.
\end{proof}

\subsection{The Method of Cyclic Projections}

The \emph{(nearest point) projection} onto a set $C$ (if it exists) is the mapping $P_C:\H\to C$ defined by
 $$P_Cx:=\argmin_{c\in C}\|c-x\|.$$
It is well known that if $C$ is closed and convex, $P_C$ is well defined (i.e., nearest points exist uniquely for all $x\in\H$) (see, for example, \cite[Prop.~2.1.2]{BBL97}). It has the variational characterization
 $$P_Cx=c \iff c\in C\text{ and }\langle x-P_Cx,C-P_Cx\rangle\leq0\text{ for all }c\in C.$$

For any $y_0\in\H$, the \emph{method of cyclic projections} can be described in terms of the  iteration scheme
 \begin{equation*}
  y_1^1:=P_{C_1}y_0,\quad y^{i+1}_n:=P_{C_{i+1}}y_n^i,\quad y^1_{n+1}:=y^{N+1}_n.
 \end{equation*}
We refer to the sequences $(y_n^1)_{n=1}^\infty,(y_n^2)_{n=1}^\infty,\dots,(y_n^N)_{n=1}^\infty$ as the \emph{cyclic projection sequences}.

Define
 \begin{equation*}
  Q_i:=P_{C_{i}}P_{C_{i-1}}\dots P_{C_1}P_{C_N}\dots P_{C_{i+1}}.
 \end{equation*}
Note that, for each $i$, the sequence $(y_n^i)_{n=1}^\infty$ is given by
 \begin{equation*}
  y^{i}_{n+1}=Q_iy^{i}_n.
 \end{equation*}

Suppose that each $\Fix Q_i$ is nonempty and let $q^1\in\Fix Q_1$. Define the sequence $(q^i)_{i=1}^N$ by
  \begin{equation*}
   q^{i+1}:=P_{C_{i+1}}q^i\in\Fix Q_{i+1}.
  \end{equation*}
Define $(d^i)_{i=1}^N$, the sequence of \emph{difference vectors}, by $d^i:=q^{i+1}-q^i$. It can be shown that the difference vectors are well-defined (i.e., they are independent of the choice of $q^1$). For further details see \cite{BBL97}.

Recall the following dichotomy theorem.
\begin{theorem}[Cyclic projections dichotomy] \label{th:cycprojdich}
 Exactly one of the following alternatives hold.
 \begin{enumerate}[(a)]
  \item Each $\Fix Q_i$ is empty. Then $\|y_n^i\|\to +\infty$, for all $i$.
  \item Each $\Fix Q_i$ is nonempty. Then, for each $i$,  $(y_n^i)_{n=1}^\infty$ weakly converges to a point $y^i$ such that $y^{i+1}=P_{C_{i+1}}y^i$, and the sequence $(y_n^{i+1}-y_n^{i})_{n=1}^\infty$ converges in norm to $d^i$.
 \end{enumerate}
\end{theorem}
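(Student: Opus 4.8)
The plan is to read every cyclic projection sequence as the orbit of the composition $Q_i$ and to run the $\lambda$-averaged machinery of Section~1. Recall first that each $P_{C_i}$ is firmly nonexpansive, hence $1/2$-averaged by Fact~\ref{fact:nonexpansive}(c), and that a composition of finitely many averaged mappings is again $\lambda$-averaged for some $\lambda\in\,]0,1[$ (see \cite[Ch.~4]{BC11}); in particular each $Q_i$ is averaged. I would then reduce the two alternatives to a single condition. Given $q^1\in\Fix Q_1$, form $q^{i+1}:=P_{C_{i+1}}q^i$ as in the text; since $Q_1=P_{C_1}P_{C_N}\cdots P_{C_2}$, the chain closes up, $q^{N+1}=Q_1q^1=q^1$, and running the projections once more around the cycle from $q^i$ shows $Q_iq^i=q^i$, so $\Fix Q_i\neq\emptyset$ for every $i$. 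Applying the same reasoning with the indices cyclically shifted gives the converse implication, so either every $\Fix Q_i$ is empty or every $\Fix Q_i$ is nonempty, and exactly one alternative holds.

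For alternative (b), assume $\Fix Q_1\neq\emptyset$ and invoke Theorem~\ref{th:averagedcvgt} with $m:=N$ and $T_j:=P_{C_{j+1}}$ (indices mod $N$), so that $T_N\cdots T_1=Q_1$ and $\Fix(T_N\cdots T_1)=\Fix Q_1\neq\emptyset$. With $x_n:=y_n^1$ the theorem yields $y_n^1-y_{n+1}^1\to0$ and, since $T_{j-1}\cdots T_1x_n=y_n^j$, weak limits $y_n^j\wto y^j$ with $y^{j+1}=P_{C_{j+1}}y^j$ for $j\le N-1$ and $y^1=P_{C_1}y^N$; because $y_n^{N+1}=y_{n+1}^1\wto y^1$ this is exactly the asserted weak convergence together with the characterization $y^{i+1}=P_{C_{i+1}}y^i$. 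For the norm convergence of the difference vectors, fix $q^1\in\Fix Q_1$, build $q^i$ and $d^i:=q^{i+1}-q^i$ as above, and put $a_n^i:=\|y_n^i-q^i\|$. Nonexpansiveness of each $P_{C_{i+1}}$ gives $a_n^{i+1}\le a_n^i$, and $q^{N+1}=q^1$ gives $a_n^{N+1}=a_{n+1}^1$, so $(a_n^1)_n$ is nonincreasing, hence convergent, and a squeeze forces $a_n^i\to a^\ast$ for every $i$. Finally, firm nonexpansiveness of $P_{C_{i+1}}$ applied to the pair $(y_n^i,q^i)$ reads
$$(a_n^{i+1})^2+\bigl\|(y_n^{i+1}-y_n^i)-d^i\bigr\|^2\le(a_n^i)^2,$$
so $\|(y_n^{i+1}-y_n^i)-d^i\|^2\le(a_n^i)^2-(a_n^{i+1})^2\to0$, which is the claim (the limit is the canonical $d^i$ by the well-definedness quoted from \cite{BBL97}).

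For alternative (a), assume every $\Fix Q_i$ is empty, fix $i$, and note that $(y_n^i)_n$ is the orbit of $Q_i$ started at $y_1^i$. Suppose for contradiction that $\|y_n^i\|\not\to\infty$; then some subsequence is bounded, and since $\lim_n\frac1n\|y_n^i\|$ exists by Fact~\ref{fact:nonexpansive}(e) and vanishes along that subsequence, it is $0$, whence the same fact gives $\|y_{n+1}^i-y_n^i\|\to0$, i.e.\ $Q_i$ is asymptotically regular at $y_1^i$. Passing to a weakly convergent subsequence $y_{n_k}^i\wto\bar y$ of the bounded subsequence and applying demiclosedness of $I-Q_i$ (Fact~\ref{fact:nonexpansive}(d)) to $(I-Q_i)y_{n_k}^i=y_{n_k}^i-y_{n_k+1}^i\to0$ forces $\bar y\in\Fix Q_i$, a contradiction; hence $\|y_n^i\|\to\infty$.

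I expect the main obstacle to be precisely this last step: upgrading ``$(y_n^i)$ has no weakly cluster fixed point'' to the quantitative divergence $\|y_n^i\|\to\infty$ rather than mere unboundedness. This is where the two equalities in Fact~\ref{fact:nonexpansive}(e) must be combined --- the asymptotic displacement rate coincides with the linear growth rate, so a bounded subsequence kills both --- before demiclosedness can be used. A secondary point needing care is the index bookkeeping that identifies the cyclic composition $Q_1$ with the composition $T_m\cdots T_1$ of Theorem~\ref{th:averagedcvgt} and tracks how the intermediate weak limits propagate through the partial compositions.
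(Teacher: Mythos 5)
Your proof is correct. Note that the paper does not actually prove Theorem~\ref{th:cycprojdich}: it simply cites \cite[Th.~5.2.1]{BBL97}, so there is no internal argument to compare against. What you have written is a valid self-contained reconstruction, and it uses exactly the toolkit the paper assembles for its Douglas--Rachford analogue: the fixed-point cycle $q^{i+1}=P_{C_{i+1}}q^i$ closing up (which settles the ``all empty or all nonempty'' dichotomy), Theorem~\ref{th:averagedcvgt} applied to the $1/2$-averaged factors $P_{C_{j+1}}$ to get the weak limits $y^{i+1}=P_{C_{i+1}}y^i$, the telescoped firm-nonexpansiveness inequality $(a_n^{i+1})^2+\|(y_n^{i+1}-y_n^i)-d^i\|^2\le (a_n^i)^2$ combined with the Fej\'er-type monotonicity of $a_n^1$ to get norm convergence of the differences, and finally the Baillon--Bruck--Reich identity in Fact~\ref{fact:nonexpansive}(e) plus demiclosedness of $I-Q_i$ to upgrade ``no bounded subsequence with a weak cluster point in $\Fix Q_i$'' to $\|y_n^i\|\to+\infty$. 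That last step is indeed the only delicate one, and you handle it correctly: a bounded subsequence forces $\lim_n\frac1n\|y_n^i\|=0$, hence asymptotic regularity of the full orbit, hence a fixed point via demiclosedness, contradicting emptiness of $\Fix Q_i$. This is the same strategy the paper itself uses in Lemmas~\ref{lem:fixedpoints} and~\ref{lem:bounded} for the cyclic Douglas--Rachford sequences (there the paper leans on Theorem~\ref{th:cycprojdich} for the $\to+\infty$ conclusion, whereas you derive it directly), so your argument also makes the paper's later reliance on the cited theorem essentially self-contained.
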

\begin{proof}
 See \cite[Th.~5.2.1]{BBL97}.
\end{proof}

\subsection{The Cyclic Douglas--Rachford Method}
The \emph{(metric) reflection} with respect to a set $C$ is the mapping $R_C:\H\to H$ given by
 $$R_C:=2P_C-I,$$
where $I$ denotes the \emph{identity mapping}. If $C$ is closed and convex, $R_C$ is well defined. It has the variational characterization (see, for example, \cite[Fac.~2.1]{BT13})
 $$R_Cx=r\iff \frac{1}{2}(r+x)\in C \text{ and }\langle x-r,c-r\rangle\leq\frac{1}{2}\|x-r\|^2\text{ for all }c\in C.$$

The \emph{Douglas--Rachford operator} is the mapping $T_{C_1,C_2}:\H\to\H$ given by
 $$T_{C_1,C_2}:=\frac{I+R_{C_2}R_{C_1}}{2}.$$

The \emph{cyclic Douglas--Rachford operator} is the mapping $T_{[C_1,C_2,\dots,C_N]}:\H\to\H$ given by
 $$T_{[C_1,C_2,\dots,C_N]}:=\prod_{i=1}^NT_{C_i,C_{i+1}},$$
where $T_{N,N+1}:=T_{N,1}$.

Where there is no ambiguity, we write $T_{i,i+1}$ to mean $T_{C_i,C_{i+1}}$, and $\sigma_i$ to mean the cyclic permutation of $C_1,C_2,\dots,C_N$ beginning with $C_i$. Under this notation
  $$T_{[\sigma_1]}:=T_{[C_1,C_2,\dots,C_{N-1},C_N]},\quad T_{[\sigma_2]}:=T_{[C_2,C_3,\dots,C_N,C_1]},\quad \text{etc}.$$
For convenience, we define $\sigma_0:=\sigma_N$ and $\sigma_{N+1}:=\sigma_1$.

For any $x_0\in\H$, the \emph{cyclic Douglas--Rachford method} can be described in terms of the iteration scheme
 \begin{equation}
  x^1_1:=x_0,\quad x_n^{i+1}:=T_{i,i+1}x^i_n,\quad x_{n+1}^1:=x_n^{N+1}.
 \end{equation}
We refer to the sequences $(x_n^1)_{n=1}^\infty,(x_n^2)_{n=1}^\infty,\dots,(x_n^N)_{n=1}^\infty$ as the \emph{cyclic Douglas--Rachford sequences}.

Note that, for each $i$, the sequence $(x_n^i)_{n=1}^\infty$ is given by
 \begin{equation}
  x_{n+1}^{i}:=T_{[\sigma_{i}]}x_n^{i}.
 \end{equation}

\begin{remark}\label{rem:conincide}
If $z\in C_i$ then $T_{i,i+1}z=P_{C_{i+1}}z$. Hence, if $x_0=y_0\in C_1$, the cyclic projection  and cyclic Douglas--Rachford sequences coincide. That is, for each $i$,
 $$y_n^i=x_n^i,\text{for }n=1,2,3,\dots.$$
If $x_0\neq y_0$ and $x_0\not\in C_1$, it is entirely possible for the cyclic projection and cyclic Douglas--Rachford sequences to be distinct. For an example, see \cite[Rem.~3.3]{BT13}. \qede
\end{remark}

\begin{remark}[Alternating reflections] The classical Douglas-Rachford method, which applies to two sets problems, performs iterations by repeated application of a Douglas-Rachford operator, i.e. $x_{n+1}:=T_{1,2}(x_n)$ for all $n$ or $x_{n+1}:=T_{2,1}(x_n)$ for all $n$.
Thus, in the two sets case, the cyclic Douglas--Rachford method may be thought of as a traditional Douglas-Rachford algorithm in which the set chosen to be reflected on first is alternated.\qede\end{remark}

\section{A Dichotomy Theorem}\label{sec:dichotomy}

We require a suite of seven preparatory lemmas.

\begin{lemma}\label{lem:averaged}
 For each index $i$,
 \begin{enumerate}[(a)]
  \item $T_{i,i+1}$ is $1/2$-averaged, and hence firmly nonexpansive.
  \item $T_{[\sigma_i]}$ is $(1-2^{-N})$-averaged.
 \end{enumerate}
\end{lemma}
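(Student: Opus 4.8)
The plan is to reduce part (a) to the composition properties of averaged maps. First I would recall the standard fact that for any closed convex set $C$, the reflection $R_C = 2P_C - I$ is nonexpansive (since $P_C$ is firmly nonexpansive, equivalently $1/2$-averaged, so $R_C$ is the ``$1$-averaged'' map associated to $P_C$). Consequently $R_{C_{i+1}}R_{C_i}$ is a composition of two nonexpansive maps, hence nonexpansive. Then $T_{i,i+1} = \tfrac12 I + \tfrac12\bigl(R_{C_{i+1}}R_{C_i}\bigr)$ is by definition $\tfrac12$-averaged, and by Fact~\ref{fact:nonexpansive}(c) this is exactly the statement that $T_{i,i+1}$ is firmly nonexpansive. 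This settles (a).

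For part (b), the key ingredient is the composition rule for averaged operators: if $S_1$ is $\alpha_1$-averaged and $S_2$ is $\alpha_2$-averaged (with $\alpha_1,\alpha_2\in\,]0,1[$), then $S_2 S_1$ is $\alpha$-averaged with $\alpha = \tfrac{\alpha_1+\alpha_2 - 2\alpha_1\alpha_2}{1-\alpha_1\alpha_2}$; this is a standard result (e.g.\ \cite[Prop.~4.32]{BC11}). I would first verify, or simply invoke, the cleaner consequence that a composition of $N$ maps each $\tfrac12$-averaged is $\bigl(\tfrac{2^{N-1}}{2^{N-1}+1}\bigr)$-averaged $= \bigl(1 - \tfrac{1}{2^{N-1}+1}\bigr)$-averaged, or — more directly matching the claimed constant — argue by induction that the composition of $N$ firmly nonexpansive maps is $(1-2^{-N})$-averaged. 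The base case $N=1$ gives $1/2$-averaged, consistent with (a). For the inductive step, suppose the composition of $N-1$ such maps is $(1-2^{-(N-1)})$-averaged; composing with one more $\tfrac12$-averaged map and plugging $\alpha_1 = 1-2^{-(N-1)}$, $\alpha_2 = \tfrac12$ into the composition formula yields, after simplification, exactly $\alpha = 1 - 2^{-N}$. Since $T_{[\sigma_i]} = \prod_{j} T_{j,j+1}$ is a composition of $N$ maps each $\tfrac12$-averaged by (a), the claim follows.

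The only mildly delicate point is the bookkeeping in the composition-constant computation: one must confirm that iterating the two-operator formula $\alpha = \tfrac{\alpha_1+\alpha_2-2\alpha_1\alpha_2}{1-\alpha_1\alpha_2}$ produces precisely $1-2^{-N}$ rather than some other value between $0$ and $1$. Writing $\beta_k := 1-\alpha_k$ (the ``defect from $1$''), the recursion becomes multiplicative in a transparent way, and one checks that $\beta_N = 2^{-N}$ as required; I expect this algebraic verification to be the main — though entirely routine — obstacle. Everything else is a direct appeal to Fact~\ref{fact:nonexpansive}(b),(c) and the cited composition lemma for averaged operators.
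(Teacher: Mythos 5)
Part (a) is fine and matches the paper: reflections are nonexpansive, so $T_{i,i+1}=\tfrac12 I+\tfrac12 R_{C_{i+1}}R_{C_i}$ is $1/2$-averaged by definition, hence firmly nonexpansive by Fact~\ref{fact:nonexpansive}(c).

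In part (b) your algebra does not check out. Plugging $\alpha_1=1-2^{-(N-1)}$ and $\alpha_2=\tfrac12$ into the sharp composition formula gives numerator $\alpha_1+\tfrac12-\alpha_1=\tfrac12$ and denominator $1-\tfrac12\alpha_1=\tfrac12+2^{-N}$, i.e.\ $\alpha=\frac{2^{N-1}}{2^{N-1}+1}$, \emph{not} $1-2^{-N}$. Likewise the recursion in $\beta=1-\alpha$ is not multiplicative: one finds $\frac{1}{\beta}=\frac{1}{\beta_1}+\frac{1}{\beta_2}-1$, so iterating from $\beta_1=\tfrac12$ gives $\beta_N=\frac{1}{N+1}$ (the composition of $N$ firmly nonexpansive maps is $\frac{N}{N+1}$-averaged), not $2^{-N}$. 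Your induction therefore does not close as written: the hypothesis $(1-2^{-(N-1)})$-averaged yields $\frac{2^{N-1}}{2^{N-1}+1}$-averaged, and you cannot assert the stated inductive conclusion without an extra step. The gap is repairable: since $T=(1-\alpha)I+\alpha R$ with $R$ nonexpansive can be rewritten as $(1-\alpha')I+\alpha'\bigl(\frac{\alpha'-\alpha}{\alpha'}I+\frac{\alpha}{\alpha'}R\bigr)$ for any $\alpha'\in[\alpha,1]$, an $\alpha$-averaged map is $\alpha'$-averaged for every larger $\alpha'$, and both $\frac{N}{N+1}$ and $\frac{2^{N-1}}{2^{N-1}+1}$ are at most $1-2^{-N}$. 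With that monotonicity remark inserted, your route goes through and in fact proves a sharper constant than the lemma claims.

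For comparison, the paper avoids the sharp formula entirely: it writes $T=\tfrac12 I+\tfrac12 R$ and $Q=2^{-k}I+(1-2^{-k})S$ and regroups $TQ=\tfrac12 Q+\tfrac12 RQ$ as $2^{-(k+1)}I$ plus $(1-2^{-(k+1)})$ times a convex combination of the nonexpansive maps $S$ and $RQ$, which lands exactly on $1-2^{-(k+1)}$ with no upgrade step needed. That direct decomposition is the cleaner way to hit the particular constant stated in the lemma.
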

\begin{proof}
 (a) Since convex reflections are nonexpansive, it immediately follows that $T_{i,i+1}$ is $1/2$-averaged. (b) Suppose that $T$ is $1/2$-averaged and $Q$ is $(1-2^{-k})$-averaged for some nonnegative integer $k$. We may write
  $$T=\frac{1}{2}I+\frac{1}{2}R,\text{ and }Q=\frac{1}{2^k}I+\left(1-\frac{1}{2^k}\right)S,$$
for nonexpansive mappings $R$ and $S$.
 Observe
 \begin{align*}
  TQ & =\frac{1}{2}Q+\frac{1}{2}RQ=\frac{1}{2^{k+1}}I+\frac{2^k-1}{2^{k+1}}S+\frac{2^k}{2^{k+1}}RQ\\
     & =\frac{1}{2^{k+1}}I+\frac{2^{k+1}-1}{2^{k+1}}\left(\frac{2^k-1}{2^{k+1}-1}S+\frac{2^k}{2^{k+1}-1}RQ\right) \\
     & =\frac{1}{2^{k+1}}I+\left(1-\frac{1}{2^{k+1}}\right)\left(\frac{2^k-1}{2^{k+1}-1}S+\frac{2^k}{2^{k+1}-1}RQ\right).
 \end{align*}
 Since $S$ and $RQ$ are nonexpansive, so is their convex combination, and hence $TQ$ is $(1-2^{k+1})$-averaged. The equivalence now follows.
\end{proof}

The follow lemma shows that the cyclic Douglas--Rachford method has similar asymptotic behaviour to the method of cyclic projections. To exploit the nonexpansive properties of $T_{[\sigma_i]}$ and $T_{i,i+1}$, we will sometimes choose $y_0:=P_{C_1}x_0$.

\begin{lemma}\label{lem:asymdiff}
 For any $x_0\in\H$, choose $y_0:=P_{C_1}x_0$. As $n\to\infty$,
 \begin{equation*}
  (x_n^i-x_n^{i+1})-(y_n^i-y_n^{i+1})\to 0,
 \end{equation*}
 for any index $i$.
\end{lemma}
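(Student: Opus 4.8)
The plan is to proceed by induction on $i$, tracking the discrepancy between the two schemes at each stage of one cycle. Recall that $x_n^{i+1} = T_{i,i+1}x_n^i$ and $y_n^{i+1} = P_{C_{i+1}}y_n^i$, so the natural quantity to control is $x_n^i - y_n^i$ together with the auxiliary quantity $T_{i,i+1}x_n^i - P_{C_{i+1}}x_n^i$. The first observation I would make precise is that $T_{i,i+1}$ and $P_{C_{i+1}}$ become asymptotically indistinguishable on the cyclic Douglas--Rachford iterates, because, as recorded in Remark~\ref{rem:conincide}, they agree exactly on $C_i$, and the iterates $x_n^i$ are approaching $C_i$ in an appropriate asymptotic sense. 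Concretely, $T_{i,i+1}z - P_{C_{i+1}}z$ can be rewritten using $T_{i,i+1} = \tfrac12(I + R_{C_{i+1}}R_{C_i})$ and $R_{C_i} = 2P_{C_i} - I$; a short computation gives $T_{i,i+1}z - P_{C_{i+1}}z = P_{C_{i+1}}R_{C_i}z - P_{C_{i+1}}P_{C_i}z - (P_{C_i}z - z)$ or a similar identity, which is controlled in norm by $\|R_{C_i}z - P_{C_i}z\| = \|P_{C_i}z - z\|$, i.e. by the distance $d(z, C_i)$.

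The engine driving the argument is then asymptotic regularity. By Lemma~\ref{lem:averaged}(b), $T_{[\sigma_i]}$ is $(1-2^{-N})$-averaged, and since $\Fix Q_i \neq \emptyset$ in the relevant case the iterates are bounded, so Fact~\ref{fact:nonexpansive}(e) yields $x_n^i - T_{[\sigma_i]}x_n^i \to 0$; unravelling the composition $T_{[\sigma_i]} = T_{i-1,i}\cdots T_{i,i+1}$ and using firm nonexpansiveness of each factor (Lemma~\ref{lem:averaged}(a)) shows that each individual step satisfies $x_n^i - T_{i,i+1}x_n^i \to 0$ as well, and similarly $x_n^i - P_{C_i}x_n^i \to 0$, hence $d(x_n^i, C_i) \to 0$ for every $i$. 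Feeding this back into the identity from the previous paragraph gives $T_{i,i+1}x_n^i - P_{C_{i+1}}x_n^i \to 0$. However, I should be careful: the lemma as stated is unconditional ("for any $x_0 \in \H$"), so I would either need the dichotomy handled here — in the unbounded case both $\|x_n^i\|$ and $\|y_n^i\|$ blow up and one wants the difference to still go to zero, which requires a slightly different argument via firm nonexpansiveness telescoping rather than asymptotic regularity — or, more likely, the intended reading is that this lemma feeds into the dichotomy and one proves the difference statement directly from the firmly nonexpansive estimates without ever passing through boundedness.

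That suggests the cleaner route: prove $(x_n^i - x_n^{i+1}) - (y_n^i - y_n^{i+1}) \to 0$ by establishing $x_n^i - y_n^i \to 0$ for each $i$ by induction, using only firm nonexpansiveness. Since $y_0 = P_{C_1}x_0 \in C_1$ and $x_1^1 = x_0$, we do not have $x_1^1 = y_1^1$ in general, but the base case is still manageable. For the inductive step, write
\begin{equation*}
 x_n^{i+1} - y_n^{i+1} = (T_{i,i+1}x_n^i - P_{C_{i+1}}x_n^i) + (P_{C_{i+1}}x_n^i - P_{C_{i+1}}y_n^i),
\end{equation*}
where the second bracket has norm at most $\|x_n^i - y_n^i\|$ by nonexpansiveness of $P_{C_{i+1}}$, so it tends to $0$ by the inductive hypothesis, and the first bracket tends to $0$ by the asymptotic-agreement fact discussed above. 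Closing the cycle ($i = N$, passing from $x_n^{N+1} = x_{n+1}^1$ to $x_{n+1}^1 - y_{n+1}^1$) requires knowing the discrepancy does not accumulate over successive cycles; this is where I expect the main obstacle to lie, and it is exactly the point where one needs the firmly nonexpansive "energy" estimate — summing $\|(I-T_{i,i+1})x_n^i\|^2$-type quantities over a cycle and over $n$ — to conclude the per-cycle discrepancy is summable or at least vanishing, rather than merely bounded. Once $x_n^i - y_n^i \to 0$ for all $i$ is secured, the claimed statement follows immediately by subtracting, and the asymptotic-regularity conclusion $x_n^i - T_{[\sigma_i]}x_n^i \to 0$ drops out as a byproduct.
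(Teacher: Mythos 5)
Your argument has two genuine problems, one fatal to the engine and one to the target. The engine: you claim that asymptotic regularity of the composition $T_{[\sigma_i]}$ unravels to $x_n^i-T_{i,i+1}x_n^i\to 0$ and $x_n^i-P_{C_i}x_n^i\to 0$ for each factor, hence $d(x_n^i,C_i)\to 0$. This unravelling only works when the factors share a common fixed point; in the inconsistent case (the whole point of this paper) $\Fix T_{i,i+1}$ may be empty, and in fact $x_n^{i+1}-x_n^i=T_{i,i+1}x_n^i-x_n^i\to d^i\neq 0$ (Theorem~\ref{th:dichotmy}). So the ``first bracket'' $T_{i,i+1}x_n^i-P_{C_{i+1}}x_n^i$ in your inductive decomposition does not vanish, and your ``cleaner route'' rests on the same false premise. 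The target: the intermediate statement $x_n^i-y_n^i\to 0$ is strictly stronger than the lemma and is false even in the consistent case --- the cyclic Douglas--Rachford and cyclic projection sequences converge to \emph{different} limits in general (a fixed point of $T_{[\sigma_i]}$ versus a point of the intersection), so their difference converges to a nonzero vector. The lemma only asserts that the \emph{increments} of the two schemes agree asymptotically, and that is all one can hope to prove.

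The fix is to apply Remark~\ref{rem:conincide} to $y_n^i$ rather than to $x_n^i$: since $y_0=P_{C_1}x_0\in C_1$, every $y_n^i$ lies in $C_i$ exactly (not just asymptotically), so $y_n^{i+1}=P_{C_{i+1}}y_n^i=T_{i,i+1}y_n^i$ --- the cyclic projection sequence \emph{is} a cyclic Douglas--Rachford sequence. Both trajectories are then driven by the same firmly nonexpansive maps, so $\|x_n^i-y_n^i\|$ is nonincreasing along the whole (doubly indexed) trajectory and hence convergent, while firm nonexpansiveness of $T_{i,i+1}$ gives
\begin{equation*}
\bigl\|(x_n^i-x_n^{i+1})-(y_n^i-y_n^{i+1})\bigr\|^2\;\leq\;\|x_n^i-y_n^i\|^2-\|x_n^{i+1}-y_n^{i+1}\|^2 .
\end{equation*}
Summing over $i=1,\dots,N$ telescopes the right-hand side to $\|x_n^1-y_n^1\|^2-\|x_{n+1}^1-y_{n+1}^1\|^2\to 0$, which kills every summand. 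This is unconditional --- no boundedness, no dichotomy, no induction on $i$, and no accumulation-of-discrepancy issue to worry about. You correctly sensed that the firmly nonexpansive energy estimate is the right tool, but you applied it to $\|(I-T_{i,i+1})x_n^i\|^2$ (which does not tend to zero) instead of to the difference of the two trajectories, and you never used the one fact that makes the choice $y_0:=P_{C_1}x_0$ meaningful.
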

\begin{proof}
 By Remark~\ref{rem:conincide}, the method cyclic projection sequence starting at $y_0$ can be consider as cyclic Douglas--Rachford sequence. Since $T_{[\sigma_1]}$ is nonexpansive,
  \begin{equation*}
   \|x^1_{n+1}-y^1_{n+1}\|\leq\|x^1_{n}-y^1_n\| \implies \lim_{n\to\infty} \|x^1_{n}-y^1_n\|\text{ exists}.
  \end{equation*}
 Since $T_{i,i+1}$ is firmly nonexpansive, for each $i$,
  \begin{align*}
   \|x^1_{n}-y^1_n\|^2-\|x^1_{n+1}-y^1_{n+1}\|^2
   &= \sum_{i=1}^N\left(\|x^i_{n}-y^i_{n}\|^2-\|x^{i+1}_{n}-y^{i+1}_{n}\|^2 \right) \\
   &\geq \sum_{i=1}^N \|(x_n^i-x_n^{i+1})-(y_n^i-y_n^{i+1})\|^2.
  \end{align*}
 The result follows by taking the limit as $n\to\infty$.
\end{proof}

\begin{lemma}\label{lem:fixedpoints}
 The sequence $(x^j_n)_{n=1}^\infty$ is bounded if and only if $\Fix T_{[\sigma_j]}$ is nonempty.
\end{lemma}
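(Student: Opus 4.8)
The plan is to prove both implications using the averagedness of $T_{[\sigma_j]}$ established in Lemma~\ref{lem:averaged}(b). The forward direction is essentially immediate: if $\Fix T_{[\sigma_j]}\neq\emptyset$, pick a fixed point $p$; then since $T_{[\sigma_j]}$ is nonexpansive, $\|x_n^j - p\| = \|T_{[\sigma_j]}^{\,n-1}x_1^j - p\| \leq \|x_1^j - p\|$ for all $n$, so $(x_n^j)_{n=1}^\infty$ is bounded.

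For the converse, suppose $(x_n^j)_{n=1}^\infty$ is bounded. By Lemma~\ref{lem:averaged}(b), $T_{[\sigma_j]}$ is $(1-2^{-N})$-averaged, so it can be written as $(1-\lambda)I + \lambda R$ with $\lambda = 1 - 2^{-N}\in\,]0,1[$ and $R$ nonexpansive. By Fact~\ref{fact:nonexpansive}(e), boundedness of $(x_n^j)$ (equivalently of $(T_{[\sigma_j]}^n x_1^j)$) implies $T_{[\sigma_j]}$ is asymptotically regular at $x_1^j$, i.e. $x_n^j - T_{[\sigma_j]}x_n^j \to 0$. Now I want to extract a fixed point. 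Since $(x_n^j)$ is bounded in Hilbert space, it has a weakly convergent subsequence $x_{n_k}^j \wto \bar{x}$. The operator $I - T_{[\sigma_j]}$ is demiclosed by Fact~\ref{fact:nonexpansive}(d) (as $T_{[\sigma_j]}$ is nonexpansive, being averaged); since $x_{n_k}^j \wto \bar{x}$ and $(I - T_{[\sigma_j]})x_{n_k}^j \to 0$, demiclosedness gives $(I - T_{[\sigma_j]})\bar{x} = 0$, that is, $\bar{x}\in\Fix T_{[\sigma_j]}$. Hence $\Fix T_{[\sigma_j]}\neq\emptyset$.

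I expect no serious obstacle here; the only point requiring a little care is confirming that the hypotheses of Fact~\ref{fact:nonexpansive}(e) and the demiclosedness principle are genuinely available — namely that $T_{[\sigma_j]}$ is $\lambda$-averaged for some $\lambda\in\,]0,1[$ (which is exactly Lemma~\ref{lem:averaged}(b), noting $1-2^{-N}\in\,]0,1[$) and that averaged maps are nonexpansive (Fact~\ref{fact:nonexpansive}(b)), so that $I - T_{[\sigma_j]}$ is demiclosed. One should also note that the index shift between $x_1^j$ and the iteration $x_{n+1}^j = T_{[\sigma_j]}x_n^j$ is harmless: boundedness of the sequence is equivalent to boundedness of the orbit $(T_{[\sigma_j]}^n x_1^j)_{n\geq 0}$, which is the form Fact~\ref{fact:nonexpansive}(e) uses. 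This completes the argument.
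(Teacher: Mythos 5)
Your proof is correct and follows essentially the same route as the paper's: nonexpansivity of $T_{[\sigma_j]}$ gives boundedness from a fixed point, while in the other direction boundedness plus Fact~\ref{fact:nonexpansive}(e) yields asymptotic regularity, and a weakly convergent subsequence together with demiclosedness of $I-T_{[\sigma_j]}$ produces a fixed point. The extra care you take in checking that $1-2^{-N}\in\,]0,1[$ and that the index shift is harmless is sound but not needed beyond what the paper already implicitly uses.
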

\begin{proof}
 Suppose $(x^j_n)_{n=1}^\infty$ is bounded. Then there exists a subsequence $(x^j_{n_k})_{k=1}^\infty$ weakly convergent to some point $z$. By Fact~\ref{fact:nonexpansive}(e), $(I-T_{[\sigma_j]})x_{n_k}\to 0$. Since $(I-T_{[\sigma_j]})$ is demiclosed, $(I-T_{[\sigma_j]})z=0\implies z\in\Fix T_{[\sigma_j]}$.

 Conversely, if $z\in\Fix T_{[\sigma_j]}$ nonexpansivity implies
  $$\|z-x^j_n\|\leq \|z-x^j_1\|\implies \|x^j_n\|\leq\|z\|+\|z-x^j_1\|.$$
 This completes the proof.
\end{proof}

\begin{lemma}\label{lem:bounded}
The following four properties are equivalent.
\begin{enumerate}[(a)]
  \item The sequence $(x^j_n)_{n=1}^\infty$ is bounded, for some index $j$.
  \item The sequences $(x^1_n)_{n=1}^\infty,(x^2_n)_{n=1}^\infty,\dots,(x^N_n)_{n=1}^\infty$ are bounded.
  \item The sequence $(y^j_n)_{n=1}^\infty$ is bounded, for some index $j$.
  \item The sequences $(y^1_n)_{n=1}^\infty,(y^2_n)_{n=1}^\infty,\dots,(y^N_n)_{n=1}^\infty$ are bounded.
\end{enumerate}
Furthermore, if $(x^j_n)_{n=1}^\infty$ is unbounded then $\|x^j_n\|\to+\infty$.
\end{lemma}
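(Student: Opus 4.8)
The plan is to prove the four conditions equivalent via the cycle of implications (a)$\Rightarrow$(b)$\Rightarrow$(d)$\Rightarrow$(c)$\Rightarrow$(a), and then treat the last assertion separately. The ingredients will be Lemmas~\ref{lem:averaged}, \ref{lem:asymdiff} and \ref{lem:fixedpoints}, Remark~\ref{rem:conincide}, the cyclic projections dichotomy (Theorem~\ref{th:cycprojdich}), and parts (d), (e) of Fact~\ref{fact:nonexpansive}.

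Three of the implications are short. For (a)$\Rightarrow$(b) I would simply propagate boundedness around the cycle: from $x^{i+1}_n=T_{i,i+1}x^i_n$ and nonexpansivity of $T_{i,i+1}$ (Lemma~\ref{lem:averaged}(a)) one gets $\|x^{i+1}_n\|\le\|x^i_n\|+\|T_{i,i+1}0\|$, so boundedness of one $(x^i_n)_n$ forces boundedness of the next, and chasing this around $x^1\to x^2\to\dots\to x^N\to x^1$ yields all of them. The implication (d)$\Rightarrow$(c) is trivial. For (c)$\Rightarrow$(a): if some $(y^j_n)_n$ is bounded, then the first alternative of Theorem~\ref{th:cycprojdich} fails, so each $\Fix Q_i\neq\emptyset$; picking $q^1\in\Fix Q_1$ and setting $q^{i+1}:=P_{C_{i+1}}q^i$ gives $q^i\in C_i$ for all $i$ and $q^{N+1}=Q_1q^1=q^1$, so by Remark~\ref{rem:conincide} $T_{i,i+1}q^i=P_{C_{i+1}}q^i=q^{i+1}$ for each $i$, whence $T_{[\sigma_1]}q^1=q^1$; thus $\Fix T_{[\sigma_1]}\neq\emptyset$ and Lemma~\ref{lem:fixedpoints} makes $(x^1_n)_n$ bounded.

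The implication (b)$\Rightarrow$(d) is the one I expect to be the real obstacle. Trying to transfer boundedness from the cyclic Douglas--Rachford sequences to the cyclic projection sequences via Lemma~\ref{lem:asymdiff} only controls the differences $x^i_n-x^{i+1}_n$ against $y^i_n-y^{i+1}_n$, which is not enough: cyclic projection sequences can have bounded --- even vanishing --- increments and still run off to infinity, so controlling differences does not control the sequences. The fix is to specialize the comparison: choose $y_0:=P_{C_1}x_0$, so that by Remark~\ref{rem:conincide} the cyclic projection sequences are also cyclic Douglas--Rachford sequences and hence $y^1_{n+1}=T_{[\sigma_1]}y^1_n$; since $T_{[\sigma_1]}$ is nonexpansive (Lemma~\ref{lem:averaged}(b)), the scalar sequence $\bigl(\|x^1_n-y^1_n\|\bigr)_n$ is nonincreasing and therefore bounded, so $\|y^1_n\|\le\|x^1_n\|+\|x^1_n-y^1_n\|$ shows $(y^1_n)_n$ is bounded whenever $(x^1_n)_n$ is. This gives $\Fix Q_1\neq\emptyset$, so by Theorem~\ref{th:cycprojdich} we are in its second alternative and every cyclic projection sequence (for any starting point) converges weakly and is in particular bounded, which is (d).

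It remains to show that an unbounded $(x^j_n)_n$ in fact satisfies $\|x^j_n\|\to+\infty$. I would argue by contradiction: if not, some subsequence is bounded and a further subsequence satisfies $x^j_{n_k}\wto z$. Because $T_{[\sigma_j]}$ is $(1-2^{-N})$-averaged, Fact~\ref{fact:nonexpansive}(e) gives $\lim_n\|x^j_{n+1}-x^j_n\|=\lim_n\tfrac1n\|x^j_n\|$; along the bounded subsequence $\tfrac1{n_k}\|x^j_{n_k}\|\to0$, so this common limit is $0$ and hence $(I-T_{[\sigma_j]})x^j_n\to0$. Demiclosedness of $I-T_{[\sigma_j]}$ (Fact~\ref{fact:nonexpansive}(d)) then forces $z\in\Fix T_{[\sigma_j]}$, so Lemma~\ref{lem:fixedpoints} makes $(x^j_n)_n$ bounded --- a contradiction.
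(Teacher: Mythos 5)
Your proof is correct, and its load-bearing step coincides with the paper's: to pass between the Douglas--Rachford and projection worlds you seed the cyclic projection iteration at $y_0:=P_{C_1}x_0$, use Remark~\ref{rem:conincide} to view it as a cyclic Douglas--Rachford sequence, and let nonexpansivity of $T_{[\sigma_1]}$ make $\bigl(\|x^1_n-y^1_n\|\bigr)_n$ nonincreasing, so the two sequences are bounded or unbounded together; Theorem~\ref{th:cycprojdich} then does the rest. Where you differ is in the surrounding organization. The paper runs this gap comparison for every index $j$ at once and lets the cyclic projections dichotomy simultaneously link the indices, deliver the equivalences, and give the ``furthermore'' clause (if $(y^j_n)_n$ is unbounded then $\|y^j_n\|\to+\infty$, hence $\|x^j_n\|\geq\|y^j_n\|-\|x^j_1-y^j_1\|\to+\infty$). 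You instead link the $x$-indices directly by propagating boundedness through the nonexpansive maps $T_{i,i+1}$, close the cycle (c)$\Rightarrow$(a) by explicitly manufacturing a fixed point of $T_{[\sigma_1]}$ from a fixed point of $Q_1$ via Remark~\ref{rem:conincide} and then invoking Lemma~\ref{lem:fixedpoints}, and you re-prove the blow-up assertion intrinsically: a non-divergent unbounded sequence would have a weak cluster point, and the observation that the limits in Fact~\ref{fact:nonexpansive}(e) exist (so a bounded \emph{subsequence} already forces asymptotic regularity) plus demiclosedness would put that cluster point in $\Fix T_{[\sigma_j]}$, contradicting Lemma~\ref{lem:fixedpoints}. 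Your route is somewhat longer but more self-contained in places --- in particular the last step does not lean on Theorem~\ref{th:cycprojdich} at all, and your sharpening of the asymptotic-regularity argument to bounded subsequences is a genuine (if standard) refinement of the paper's Lemma~\ref{lem:fixedpoints}. Both arguments are sound.
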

\begin{proof}
 Fix an index $j$. For any $x_0\in\mathcal H$, choose $y_0:=P_{C_1}x_0$. Since $T_{[\sigma_j]}$ is nonexpansive,
  $$\|x^j_n-y^j_n\|\leq \|x^j_{n-1}-y^j_{n-1}\|\leq \dots\leq \|x^j_1-y^j_1\|.$$
 By the triangle inequality
 $$\|x^j_n\|\leq \|x^j_1-y^j_1\|+\|y_n^j\|\text{ and }\|y_n^j\|\leq \|x_n^j\|+\|x^j_1-y^j_1\|.$$
 Thus $(x^j_n)_{n=1}^\infty$ is bounded if and only if $(y^j_n)_{n=1}^\infty$ is bounded. By Theorem~\ref{th:cycprojdich}, $(y^j_n)_{n=1}^\infty$ is bounded if and only if $(y^1_n)_{n=1}^\infty,(y^2_n)_{n=1}^\infty,\dots,(y^N_n)_{n=1}^\infty$, and if $(y^j_n)_{n=1}^\infty$ is unbounded then $\|x^j_x\|\to+\infty$. The result follows by combining these two statements.
\end{proof}

We observe
\begin{align}
 x^{i+1}_n:=T_{i,i+1}x^i_n
 &= \frac{x^i_n+R_{C_{i+1}}R_{C_i}x^i_n}{2} \notag \\
 &= \frac{x^i_n+2P_{C_{i+1}}R_{C_i}x^i_n-R_{C_i}x^i_n}{2} \notag \\
 &= \frac{2x^i_n+2P_{C_{i+1}}R_{C_i}x^i_n-2P_{C_i}x^i_n}{2} \notag \\
 &= x^i_n+P_{C_{i+1}}R_{C_i}x^i_n-P_{C_i}x^i_n. \notag \\
 \implies P_{C_{i+1}}R_{C_i}x^i_n &= x^{i+1}_n-x^i_n+P_{C_i}x^i_n \label{eq:TABxn}
\end{align}

\begin{lemma}\label{lem:nbound}
 For all $i$ and for all $n$,
  \begin{equation}\label{eq:nbound1}
   \|x^{i+1}_n-P_{C_{i+1}}x^{i+1}_n\|^2\leq \langle x^{i+1}_n-P_{C_{i+1}}x^{i+1}_n,x^i_n-P_{C_i}x^i_n\rangle.
  \end{equation}
 In particular, for all $i$ and all $n$,
 \begin{equation}\label{eq:nbound2}
  \|x^{i+1}_n-P_{C_{i+1}}x^{i+1}_n\|\leq \|x^i_n-P_{C_i}x^i_n\|.
 \end{equation}
\end{lemma}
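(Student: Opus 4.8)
The plan is to combine the identity \eqref{eq:TABxn} with the obtuse-angle (variational) characterization of the metric projection recorded in Subsection~1.2. Fix an index $i$ and an integer $n$, and write $a := x^i_n$ for brevity, so that $x^{i+1}_n = T_{i,i+1}a$. The point $P_{C_{i+1}}R_{C_i}a$ lies in $C_{i+1}$ by definition of the projection. Hence, applying the variational characterization of $P_{C_{i+1}}$ at the point $x^{i+1}_n$ with test point $c := P_{C_{i+1}}R_{C_i}a \in C_{i+1}$ gives
$$\langle x^{i+1}_n-P_{C_{i+1}}x^{i+1}_n,\ P_{C_{i+1}}R_{C_i}a-P_{C_{i+1}}x^{i+1}_n\rangle \leq 0.$$
Rearranging this inequality (adding and subtracting $x^{i+1}_n$ inside the second argument) yields
$$\|x^{i+1}_n-P_{C_{i+1}}x^{i+1}_n\|^2 \leq \langle x^{i+1}_n-P_{C_{i+1}}x^{i+1}_n,\ x^{i+1}_n-P_{C_{i+1}}R_{C_i}a\rangle.$$

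The next step is to simplify the second argument on the right using \eqref{eq:TABxn}, namely $P_{C_{i+1}}R_{C_i}x^i_n = x^{i+1}_n-x^i_n+P_{C_i}x^i_n$. Substituting this gives $x^{i+1}_n-P_{C_{i+1}}R_{C_i}a = x^i_n-P_{C_i}x^i_n$, which is precisely the vector appearing on the right-hand side of \eqref{eq:nbound1}; this establishes \eqref{eq:nbound1}.

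Finally, \eqref{eq:nbound2} follows by applying the Cauchy--Schwarz inequality to the right-hand side of \eqref{eq:nbound1}: if $x^{i+1}_n = P_{C_{i+1}}x^{i+1}_n$ the claimed bound is trivial, and otherwise one divides both sides of the resulting inequality by $\|x^{i+1}_n-P_{C_{i+1}}x^{i+1}_n\| > 0$.

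I do not anticipate a genuine obstacle here: the argument is a short computation once one sees the right test point for the projection inequality. The only mildly non-obvious move is recognizing, via \eqref{eq:TABxn}, that the difference vector $x^{i+1}_n-P_{C_{i+1}}R_{C_i}x^i_n$ collapses exactly to $x^i_n-P_{C_i}x^i_n$; everything else is the standard characterization of nearest-point projections onto closed convex sets together with Cauchy--Schwarz.
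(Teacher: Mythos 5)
Your proposal is correct and is essentially the paper's own argument run in reverse: the paper starts from the difference of the two sides of \eqref{eq:nbound1}, rewrites it via \eqref{eq:TABxn} as $\langle x^{i+1}_n-P_{C_{i+1}}x^{i+1}_n,\ P_{C_{i+1}}R_{C_i}x^i_n-P_{C_{i+1}}x^{i+1}_n\rangle$, and concludes it is nonpositive from the variational characterization of $P_{C_{i+1}}$ with exactly your test point $P_{C_{i+1}}R_{C_i}x^i_n\in C_{i+1}$, finishing \eqref{eq:nbound2} with Cauchy--Schwarz just as you do. No substantive difference.
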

\begin{proof}
By \eqref{eq:TABxn} and the variational characterization of convex projections,
\begin{align*}
 &\|x^{i+1}_n-P_{C_{i+1}}x^{i+1}_n\|^2-\langle x^{i+1}_n-P_{C_{i+1}}x^{i+1}_n,x^i_n-P_{C_i}x^i_n\rangle \\
 &=\langle x^{i+1}_n-P_{C_{i+1}}x^{i+1}_n, (x^{i+1}_n-x^i_n+P_{C_i}x^i_n)-P_{C_{i+1}}x^{i+1}_n\rangle \\
 &=\langle x^{i+1}_n-P_{C_{i+1}}x^{i+1}_n, P_{C_{i+1}}R_{C_i}x^i_n-P_{C_{i+1}}x^{i+1}_n\rangle \\
 &\leq 0.
\end{align*}
This proves \eqref{eq:nbound1}. Equation \eqref{eq:nbound2} now follows by an application of the Cauchy--Schwarz inequality.
\end{proof}

\begin{lemma}\label{lem:sum2}
  For all $i$ and for all $m$,
  \begin{multline}\label{eq:doublesum}
     \sum_{n=2}^m\sum_{i=1}^N \|(x^{i+1}_n-P_{C_{i+1}}x^{i+1}_n)-(x^{i}_n-P_{C_{i}}x^{i}_n)\|^2 \\
   \leq \langle x^{1}_2-P_{C_{1}}x^{1}_2,x^N_1-P_{C_{N}}x^N_1\rangle-\langle x^{1}_{m+1}-P_{C_{1}}x^{1}_{m+1},x^N_m-P_{C_N}x^N_m\rangle,
  \end{multline}
  where $\langle x^{1}_2-P_{C_{1}}x^{1}_2,x^N_1-P_{C_{N}}x^N_1\rangle$ and $\langle x^{1}_{m+1}-P_{C_{1}}x^{1}_{m+1},x^N_m-P_{C_N}x^N_m\rangle$ are nonnegative. In particular, the double-sum in \eqref{eq:doublesum} is bounded, and hence, as $n\to\infty$,
   $$(x^{i+1}_n-P_{C_{i+1}}x^{i+1}_n)-(x^{i}_n-P_{C_{i}}x^{i}_n)\to 0.$$
\end{lemma}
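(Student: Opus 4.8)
The plan is to introduce the shorthand $u_n^i := x_n^i - P_{C_i}x_n^i$ and $d_n^i := \langle u_n^{i+1}, u_n^i\rangle$, so that Lemma~\ref{lem:nbound}, inequality~\eqref{eq:nbound1}, becomes the compact statement $\|u_n^{i+1}\|^2 \le d_n^i$. This already disposes of the non-negativity claims, since $d_n^i \ge \|u_n^{i+1}\|^2 \ge 0$ for every $i$ and $n$. Because $x_n^{N+1} = x_{n+1}^1$ and $C_{N+1} = C_1$, we have $u_n^{N+1} = u_{n+1}^1$, so the vectors $u_n^i$ string together into one cyclically indexed sequence; adopting the matching convention $u_n^0 := u_{n-1}^N$, hence $d_n^0 := d_{n-1}^N$, Lemma~\ref{lem:nbound} applies equally to the ``preceding'' pair and yields $\|u_n^i\|^2 \le d_n^{i-1}$ for all $i$ and $n$ (the case $i=1$ being the wrap-around instance with index $N$ at step $n-1$).

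Next I would establish the pointwise estimate that each summand is dominated by a one-step decrease of $d$: expanding the square and using the two instances of Lemma~\ref{lem:nbound} just recorded,
\[
\|u_n^{i+1} - u_n^i\|^2 = \|u_n^{i+1}\|^2 - 2 d_n^i + \|u_n^i\|^2 \le \|u_n^i\|^2 - d_n^i \le d_n^{i-1} - d_n^i .
\]
Summing first over $i = 1,\dots,N$ collapses the inner sum to $d_n^0 - d_n^N = d_{n-1}^N - d_n^N$, and then summing over $n = 2,\dots,m$ telescopes this to $d_1^N - d_m^N$. Unwinding the shorthand, $d_1^N = \langle u_1^{N+1}, u_1^N\rangle = \langle x_2^1 - P_{C_1}x_2^1,\, x_1^N - P_{C_N}x_1^N\rangle$ and $d_m^N = \langle x_{m+1}^1 - P_{C_1}x_{m+1}^1,\, x_m^N - P_{C_N}x_m^N\rangle$, which is precisely the right-hand side of~\eqref{eq:doublesum}.

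For the last assertion, observe that $d_m^N \ge 0$, so the partial sums on the left of~\eqref{eq:doublesum} are bounded above by the single quantity $d_1^N$ independently of $m$; hence the double series converges, and its general term tends to $0$, i.e.\ $(x_n^{i+1} - P_{C_{i+1}}x_n^{i+1}) - (x_n^i - P_{C_i}x_n^i) \to 0$ as $n \to \infty$ for each fixed $i$. The only point demanding care is the cyclic bookkeeping: one must verify that the wrap-around pair contributing $d_n^0 = d_{n-1}^N$ meshes correctly with the summation range $n \ge 2$, so that exactly the two advertised inner products — and no stray boundary terms — survive the telescoping. Everything else is a one-line expansion of a norm together with two invocations of Lemma~\ref{lem:nbound}.
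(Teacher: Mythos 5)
Your proof is correct and follows essentially the same route as the paper: both expand the squared norm, apply Lemma~\ref{lem:nbound} twice (at indices $i$ and $i-1$) to dominate each summand by $\langle x^{i}_n-P_{C_{i}}x^{i}_n,x^{i-1}_n-P_{C_{i-1}}x^{i-1}_n\rangle-\langle x^{i+1}_n-P_{C_{i+1}}x^{i+1}_n,x^i_n-P_{C_i}x^i_n\rangle$, and telescope the double sum using the wrap-around identifications $x^{N+1}_n=x^1_{n+1}$, $C_{N+1}=C_1$. Your shorthand $u^i_n$, $d^i_n$ and the explicit check that the boundary terms mesh with the range $n\ge 2$ merely make the paper's bookkeeping more transparent.
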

\begin{proof}
Applying Lemma~\ref{lem:nbound},
\begin{align*}
 &\sum_{n=2}^m\sum_{i=1}^N \|(x^{i+1}_n-P_{C_{i+1}}x^{i+1}_n)-(x^{i}_n-P_{C_{i}}x^{i}_n)\|^2  \\
 &= \sum_{n=2}^m\sum_{i=1}^N\left(\|x^{i+1}_n-P_{C_{i+1}}x^{i+1}_n\|^2-2\langle x^{i+1}_n-P_{C_{i+1}}x^{i+1}_n,x^i_n-P_{C_i}x^i_n\rangle\right. \\
 &\qquad \left. +\|x^{i}_n-P_{C_{i}}x^{i}_n\|^2\right) \\
 &\leq\sum_{n=2}^m\sum_{i=1}^N\left(\langle x^{i}_n-P_{C_{i}}x^{i}_n,x^{i-1}_n-P_{C_{i-1}}x^{i-1}_n\rangle-\langle x^{i+1}_n-P_{C_{i+1}}x^{i+1}_n,x^i_n-P_{C_i}x^i_n\rangle\right). \\
 &=\langle x^{1}_2-P_{C_{1}}x^{1}_2,x^{0}_2-P_{C_{0}}x^{0}_2\rangle-\langle x^{N+1}_m-P_{C_{N+1}}x^{N+1}_m,x^N_m-P_{C_N}x^N_m\rangle \\
 &=\langle x^{1}_2-P_{C_{1}}x^{1}_2,x^N_1-P_{C_{N}}x^N_1\rangle-\langle x^{1}_{m+1}-P_{C_{1}}x^{1}_{m+1},x^N_m-P_{C_N}x^N_m\rangle.
\end{align*}
The nonnegativity of $\langle x^{1}_2-P_{C_{1}}x^{1}_2,x^N_1-P_{C_{N}}x^N_1\rangle$ and $\langle x^{1}_{m+1}-P_{C_{1}}x^{1}_{m+1},x^N_m-P_{C_N}x^N_m\rangle$ is a consequence of \eqref{eq:nbound1}.
\end{proof}

We now prove the analogue of Lemma~\ref{lem:nbound} which will be applied to the limits (if they exist) of the cyclic Douglas--Rachford sequences. As before, we may deduce
\begin{equation} P_{C_{i+1}}R_{C_i}x^i = x^{i+1}-x^i+P_{C_i}x^i. \label{eq:TABx}
\end{equation}

\begin{lemma}\label{lem:limitpoints}
 Suppose $(x^i)_{i=1}^N$ are points such that $x^{i+1}=T_{i,i+1}x^i$. Then, for all $i$,
  \begin{equation*}
   P_{C_{i+1}}R_{C_i}x^i-P_{C_{i+1}}x^{i+1}=(x^{i+1}-x^i)-(P_{C_{i+1}}x^{i+1}-P_{C_i}x^i)=0.
  \end{equation*}
\end{lemma}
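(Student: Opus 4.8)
The plan is to handle the two equalities in turn. The second (the middle) one is immediate: subtracting $P_{C_{i+1}}x^{i+1}$ from both sides of \eqref{eq:TABx} rearranges directly into
$P_{C_{i+1}}R_{C_i}x^i-P_{C_{i+1}}x^{i+1}=(x^{i+1}-x^i)-(P_{C_{i+1}}x^{i+1}-P_{C_i}x^i)$,
with no further work. So everything reduces to showing that the common value of these expressions is $0$; equivalently, writing $u^i:=x^i-P_{C_i}x^i$, that $u^{i+1}=u^i$ for every $i$. Throughout I read the hypothesis with the standing conventions $C_{N+1}=C_1$ and $x^{N+1}=x^1$, so that in fact $x^1\in\Fix T_{[\sigma_1]}$ and $u^{N+1}=u^1$.

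First I would establish the exact analogue of \eqref{eq:nbound1} for these points, namely $\|u^{i+1}\|^2\le\langle u^{i+1},u^i\rangle$ for all $i$. The argument is word-for-word the proof of Lemma~\ref{lem:nbound}, with \eqref{eq:TABx} replacing \eqref{eq:TABxn}: expand $\|u^{i+1}\|^2-\langle u^{i+1},u^i\rangle=\langle u^{i+1},\,(x^{i+1}-x^i+P_{C_i}x^i)-P_{C_{i+1}}x^{i+1}\rangle=\langle u^{i+1},\,P_{C_{i+1}}R_{C_i}x^i-P_{C_{i+1}}x^{i+1}\rangle$, which is $\le 0$ by the variational characterization of $P_{C_{i+1}}$ at the point $x^{i+1}$ with $c=P_{C_{i+1}}R_{C_i}x^i\in C_{i+1}$. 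By Cauchy--Schwarz this also gives $\|u^{i+1}\|\le\|u^i\|$ for every $i$, the analogue of \eqref{eq:nbound2}.

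The key step is then to run these inequalities around the cycle. Since $u^{N+1}=u^1$, cyclicity yields $\sum_{i=1}^N\|u^{i+1}\|^2=\sum_{i=1}^N\|u^i\|^2$, hence $0\le\sum_{i=1}^N\|u^{i+1}-u^i\|^2=2\sum_{i=1}^N\bigl(\|u^i\|^2-\langle u^{i+1},u^i\rangle\bigr)\le 0$, the last inequality being the summed form of $\|u^{i+1}\|^2\le\langle u^{i+1},u^i\rangle$ (again using $\sum_i\|u^{i+1}\|^2=\sum_i\|u^i\|^2$). Therefore every summand $\|u^{i+1}-u^i\|^2$ vanishes, i.e. $u^{i+1}=u^i$ for all $i$, which together with the middle identity above completes the proof. (Alternatively, one can skip the telescoping: $\|u^{i+1}\|\le\|u^i\|$ around the cycle forces all the $\|u^i\|$ to be equal, and the equality case of Cauchy--Schwarz for equal-norm vectors then gives $u^{i+1}=u^i$.)

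I expect the only genuinely delicate point to be the cyclic bookkeeping: one must make explicit that the hypothesis generating $(x^i)_{i=1}^N$ via $x^{i+1}=T_{i,i+1}x^i$ wraps around, i.e. $x^{N+1}=x^1$, so that the full cyclic sum closes up. This is essential, since the one-sided inequality $\|u^{i+1}\|\le\|u^i\|$ on its own does not force equality (a single Douglas--Rachford step need not align the defect vectors). Everything else is a transcription of Lemmas~\ref{lem:nbound} and~\ref{lem:sum2} to the single-step, limiting setting.
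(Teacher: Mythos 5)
Your proof is correct, and it reaches the conclusion by a somewhat different mechanism than the paper. The paper's proof is a two-line reduction: since each $x^i\in\Fix T_{[\sigma_i]}$ (using the same wraparound $x^{N+1}=x^1$ you flag), the cyclic Douglas--Rachford trajectory started at $x_0:=x^1$ is stationary with $x_n^i\equiv x^i$, so the conclusion of Lemma~\ref{lem:sum2}, that $(x^{i+1}_n-P_{C_{i+1}}x^{i+1}_n)-(x^{i}_n-P_{C_{i}}x^{i}_n)\to 0$, forces these constant differences to vanish identically; \eqref{eq:TABx} then identifies them with $P_{C_{i+1}}R_{C_i}x^i-P_{C_{i+1}}x^{i+1}$. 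You instead re-derive the single-step analogue $\|u^{i+1}\|^2\le\langle u^{i+1},u^i\rangle$ of \eqref{eq:nbound1} and close the telescoped sum exactly over one cycle, using $u^{N+1}=u^1$ to get $\sum_{i=1}^N\|u^{i+1}-u^i\|^2\le \langle u^1,u^0\rangle-\langle u^{N+1},u^N\rangle=0$. The two arguments rest on the same inequality and the same cyclic bookkeeping; what yours buys is self-containedness and transparency (no appeal to a limit of a constant sequence -- the cancellation is exact and finite), while the paper's buys brevity by reusing Lemma~\ref{lem:sum2} verbatim. Your closing observation, that the one-sided monotonicity $\|u^{i+1}\|\le\|u^i\|$ alone is not enough and the cycle must genuinely close, is exactly the right point to emphasise, and your Cauchy--Schwarz equality-case variant is also sound.
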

\begin{proof}
 Consider the cyclic Douglas--Rachford sequences for initial point $x_0:=x^1$. Since $x^{i}\in\Fix T_{[\sigma_i]}$, for each $i$, the result follows from \eqref{eq:TABx} and Lemma~\ref{lem:sum2}.
\end{proof}

We are now ready to prove a dichotomy theorem which is the cyclic Douglas--Rachford method analogue of Theorem~\ref{th:cycprojdich}.

\begin{theorem}[Cyclic Douglas--Rachford dichotomy]\label{th:dichotmy}
The following holds.
\begin{enumerate}[(a)]
 \item For each $i$,
  $$P_{C_{i+1}}R_{C_i}x_n^i-P_{C_{i+1}}x_n^i=(x_n^{i+1}-x_n^i)-(P_{C_{i+1}}x_n^{i+1}-P_{C_i}x_n^i)\to 0.$$
 \item Exactly one of the following alternatives hold.
  \begin{enumerate}[(i)]
   \item Each $\Fix T_{[\sigma_i]}$ is empty. Then $\|x_n^i\|\to+\infty$, for all $i$.
   \item Each $\Fix T_{[\sigma_i]}$ is nonempty. Then, for each $i$,
   $$x_n^i\wto x^{i}\in\Fix T_{[\sigma_i]} \text{ with }x^{i+1}=T_{i,i+1}x^{i}.$$
   Furthermore, for each $i$,
    \begin{align*}
      &x_n^{i+1}-x_n^i=P_{C_{i+1}}R_{C_i}x_n^i-P_{C_i}x_n^i\to d^i,&& P_{C_{i+1}}x_n^{i+1}-P_{C_i}x_n^i\to d^i,\\
    &x^{i+1}-x^i=P_{C_{i+1}}x^{i+1}-P_{C_i}x^i=d^i,&& P_{C_{i+1}}R_{C_i}x^i=P_{C_{i+1}}x^{i+1}.
    \end{align*}
  \end{enumerate}
  \end{enumerate}
\end{theorem}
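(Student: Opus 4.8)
The plan is to assemble the conclusion from the seven preparatory lemmas together with Theorems~\ref{th:averagedcvgt} and~\ref{th:cycprojdich}; there is no single hard estimate, since the analytic work has been front-loaded into the lemmas. For part (a), rearranging \eqref{eq:TABxn} to isolate $P_{C_{i+1}}R_{C_i}x_n^i - P_{C_{i+1}}x_n^{i+1}$ gives the displayed identity at once, because $(x_n^{i+1}-x_n^i)-(P_{C_{i+1}}x_n^{i+1}-P_{C_i}x_n^i)$ is merely a regrouping of $(x_n^{i+1}-P_{C_{i+1}}x_n^{i+1})-(x_n^i-P_{C_i}x_n^i)$; by Lemma~\ref{lem:sum2} the latter tends to $0$, which gives the asserted convergence.

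For the dichotomy in part (b), Lemma~\ref{lem:fixedpoints} says $(x_n^j)_n$ is bounded exactly when $\Fix T_{[\sigma_j]}\neq\emptyset$, and Lemma~\ref{lem:bounded} says boundedness of one such sequence forces boundedness of all of them. Hence the two alternatives are exhaustive and mutually exclusive: if some $\Fix T_{[\sigma_j]}$ is nonempty then every $(x_n^i)_n$ is bounded, so every $\Fix T_{[\sigma_i]}$ is nonempty. In alternative (i) each $(x_n^i)_n$ is unbounded by Lemma~\ref{lem:fixedpoints}, so $\|x_n^i\|\to+\infty$ by the final clause of Lemma~\ref{lem:bounded}.

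For alternative (ii), all the sequences are bounded. The key move is to apply Theorem~\ref{th:averagedcvgt} not to the single cyclic operator but to the length-$N$ family $(T_{i,i+1})_{i=1}^N$, each $1/2$-averaged by Lemma~\ref{lem:averaged}(a); their composition in the order $T_{N,N+1}\cdots T_{1,2}$ is $T_{[\sigma_1]}$, whose fixed-point set is nonempty here. Since $x_{n+1}^1=T_{[\sigma_1]}x_n^1$ and the successive partial compositions applied to $x_n^1$ are precisely $x_n^2,\dots,x_n^N$, Theorem~\ref{th:averagedcvgt} supplies points $x^i$ with $x_n^i\wto x^i\in\Fix T_{[\sigma_i]}$ and $x^{i+1}=T_{i,i+1}x^i$ for every $i$ (the wrap-around $x^1=T_{N,N+1}x^N$ follows since $x_n^{N+1}=x_{n+1}^1\wto x^1$). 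For the remaining identities, choose $y_0:=P_{C_1}x_0$; then $(y_n^j)_n$ is bounded by Lemma~\ref{lem:bounded}, so alternative (b) of Theorem~\ref{th:cycprojdich} holds and $y_n^{i+1}-y_n^i\to d^i$ in norm. Lemma~\ref{lem:asymdiff} then forces $x_n^{i+1}-x_n^i\to d^i$, while \eqref{eq:TABxn} rewrites this difference as $P_{C_{i+1}}R_{C_i}x_n^i-P_{C_i}x_n^i$ and part (a) upgrades it to $P_{C_{i+1}}x_n^{i+1}-P_{C_i}x_n^i\to d^i$. Finally, since $x_n^{i+1}-x_n^i$ also converges weakly to $x^{i+1}-x^i$, uniqueness of weak limits gives $x^{i+1}-x^i=d^i$, and Lemma~\ref{lem:limitpoints} applied to $(x^i)_{i=1}^N$ yields $P_{C_{i+1}}x^{i+1}-P_{C_i}x^i=x^{i+1}-x^i=d^i$ together with $P_{C_{i+1}}R_{C_i}x^i=P_{C_{i+1}}x^{i+1}$.

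The only genuinely delicate point is the bookkeeping. One must recognise that the correct comparison object is the cyclic-projection sequence started at $P_{C_1}x_0$ rather than at $x_0$, so that firm nonexpansivity of the $T_{i,i+1}$ can be exploited through Lemma~\ref{lem:asymdiff}, and one must confirm that the difference vectors $d^i$ furnished by Theorem~\ref{th:cycprojdich} are literally the limits occurring here. Likewise, applying Theorem~\ref{th:averagedcvgt} to the family of individual Douglas--Rachford operators, and not to $T_{[\sigma_1]}$ alone, is what simultaneously produces the weak limits of all the intermediate sequences and the relations $x^{i+1}=T_{i,i+1}x^i$; this is the step most easily gotten wrong.
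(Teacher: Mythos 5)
Your proof is correct and follows essentially the same route as the paper: part (a) from Lemma~\ref{lem:sum2} via the identity \eqref{eq:TABxn}, the dichotomy from Lemmas~\ref{lem:fixedpoints} and~\ref{lem:bounded}, the weak limits and relations $x^{i+1}=T_{i,i+1}x^i$ from Theorem~\ref{th:averagedcvgt} applied to the family $(T_{i,i+1})_{i=1}^N$, and the norm limits from Lemma~\ref{lem:asymdiff} with Theorem~\ref{th:cycprojdich} and Lemma~\ref{lem:limitpoints}. Your derivation of $x^{i+1}-x^i=d^i$ by uniqueness of weak limits is a slightly cleaner shortcut than the paper's re-invocation of Theorem~\ref{th:cycprojdich} at the initial point $x^1$, but it is a cosmetic difference only.
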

\begin{proof}
 (a) follows by Lemma~\ref{lem:sum2}. (b) By appealing to Lemmas~\ref{lem:fixedpoints} and \ref{lem:bounded}, we establish the two possible alternatives: either $\Fix T_{[\sigma_i]}=\emptyset$ and $\|x^i_n\|\to+\infty$ for all $i$, or $\Fix T_{[\sigma_i]}\neq\emptyset$ for all $i$.

 If each $\Fix T_{[\sigma_i]}\neq\emptyset$,  Lemma~\ref{lem:averaged} together with Theorem~\ref{th:averagedcvgt} imply that the sequence $(x_n^i)_{n=1}^\infty$ converges weakly to a point $x^i\in\Fix T_{[\sigma_i]}$ with $x^{i+1}=T_{i,i+1}x^i$.  Lemma~\ref{lem:asymdiff} with Theorem~\ref{th:cycprojdich} implies $x_n^{i+1}-x_n^i\to d^i$, which together with (a) implies $P_{C_{i+1}}x_n^{i+1}-P_{C_i}x_n^i\to d^i$.

Lemma~\ref{lem:limitpoints} together with Theorem~\ref{th:cycprojdich} applied to the cyclic Douglas--Rachford sequences having initial point $x^1$ yields $x^{i+1}-x^i=P_{C_{i+1}}x^{i+1}-P_{C_i}x^i=d^i$ and  $P_{C_{i+1}}R_{C_i}x_n^i=P_{C_{i+1}}x_n^{i+1}$.
\end{proof}

If $\bigcap_{i=1}^NC_i \neq \emptyset$, it can be shown that the limits $(x^i)_{i=1}^N$ coincide (see, for example, \cite[Lem.~2.3]{BT13}). In this case, we obtain \cite[Th.~3.1]{BT13} as a special case of Theorem~\ref{th:dichotmy}. That is, we have the following corollary.

\begin{corollary}[Consistent cyclic Douglas--Rachford iterations]\label{cor:cycDR}
 Suppose $\cap_{i=1}^NC_i\neq\emptyset$. Then the cyclic Douglas--Rachford sequences weakly converge to a common point $x$ such that $P_{C_i}x=P_{C_j}x$ for all indices $i,j$. In particular, $P_{C_j}\in\cap_{i=1}^NC_i$ for any index $j$.
\end{corollary}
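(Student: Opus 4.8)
\textbf{Proof proposal for Corollary~\ref{cor:cycDR}.}

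The plan is to reduce everything to the dichotomy Theorem~\ref{th:dichotmy}(b) by first observing that the consistent case forces alternative (ii). Indeed, if $x\in\bigcap_{i=1}^N C_i$, then $R_{C_i}x=x$ for every $i$, so $T_{i,i+1}x=x$ and therefore $x\in\Fix T_{[\sigma_i]}$ for every $i$; in particular each $\Fix T_{[\sigma_i]}$ is nonempty, ruling out alternative (i). Hence Theorem~\ref{th:dichotmy}(b)(ii) applies: for each $i$ the sequence $(x_n^i)_{n=1}^\infty$ converges weakly to some $x^i\in\Fix T_{[\sigma_i]}$ with $x^{i+1}=T_{i,i+1}x^i$, and the difference vectors satisfy $x^{i+1}-x^i=P_{C_{i+1}}x^{i+1}-P_{C_i}x^i=d^i$.

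The core of the argument is to show that when the intersection is nonempty, all the difference vectors $d^i$ vanish, so that $x^1=x^2=\dots=x^N=:x$ and $P_{C_i}x=P_{C_{i+1}}x$ for all $i$. The cleanest route, as indicated in the remark preceding the statement, is to invoke \cite[Lem.~2.3]{BT13}, which asserts exactly that the fixed-point sets $\Fix T_{[\sigma_i]}$ all contain the common point and that on the relevant iterates the limits coincide; alternatively one can argue directly. For a direct argument, fix $p\in\bigcap_{i=1}^N C_i$ and use that $T_{i,i+1}$ is firmly nonexpansive with $p$ as a fixed point to get, for each $i$,
\[
 \|x_n^{i+1}-p\|^2+\|(x_n^i-x_n^{i+1})\|^2\leq\|x_n^i-p\|^2,
\]
since $(I-T_{i,i+1})p=0$. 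Here I would actually use the sharper firmly-nonexpansive inequality involving the reflected quantities, but the upshot is that summing the telescoping inequality $\|x_n^{i+1}-p\|^2\leq\|x_n^i-p\|^2$ around one cycle shows $\|x_n^1-p\|$ is nonincreasing in $n$ and, because the per-step decrements $\sum_i(\|x_n^i-p\|^2-\|x_n^{i+1}-p\|^2)$ are summable, that $x_n^i-x_n^{i+1}\to 0$ for each $i$. Combined with Theorem~\ref{th:dichotmy}(b)(ii), which gives $x_n^{i+1}-x_n^i\to d^i$, this forces $d^i=0$ for every $i$, hence $x^1=\dots=x^N=x$ and, from $P_{C_{i+1}}x^{i+1}-P_{C_i}x^i=d^i=0$, that $P_{C_i}x=P_{C_j}x$ for all $i,j$.

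Finally, since $x\in\Fix T_{[\sigma_1]}$ and all difference vectors vanish, chasing the identity $x^{i+1}=T_{i,i+1}x^i=x$ with $x\in\Fix T_{[\sigma_i]}$ gives, via Lemma~\ref{lem:limitpoints} (with all $x^i$ equal to $x$), that $P_{C_{i+1}}R_{C_i}x=P_{C_{i+1}}x$ and $x-x=P_{C_{i+1}}x-P_{C_i}x$, confirming $P_{C_i}x=P_{C_{i+1}}x$ for all $i$; denote this common value $P_{C_j}x$. Since $P_{C_j}x\in C_j$ for each $j$ and all these projections agree, the common point lies in $\bigcap_{i=1}^N C_i$, which is the last assertion. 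The main obstacle is the middle step: verifying that consistency forces every $d^i=0$. Everything hinges on extracting summability of the decrements from the firmly-nonexpansive inequalities applied around a full cycle, after which the identification with the (norm-convergent) difference vectors from Theorem~\ref{th:dichotmy} closes the argument; if one is willing to simply cite \cite[Lem.~2.3]{BT13} this step is immediate.
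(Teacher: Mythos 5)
Your proposal is correct and follows essentially the same route as the paper, which simply observes that consistency forces alternative (ii) of Theorem~\ref{th:dichotmy} and cites \cite[Lem.~2.3]{BT13} for the fact that the limits $(x^i)_{i=1}^N$ coincide. Your direct verification of that step --- using the firmly nonexpansive inequality $\|x_n^{i+1}-p\|^2+\|x_n^i-x_n^{i+1}\|^2\leq\|x_n^i-p\|^2$ for $p\in\bigcap_{i=1}^N C_i$, telescoping around a cycle to get summability of $\sum_i\|x_n^i-x_n^{i+1}\|^2$, and concluding $d^i=0$ --- is a sound self-contained substitute for that citation (and the inequality as you wrote it already suffices; no ``sharper'' version is needed).
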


\begin{remark}
The proof of Corollary~\ref{cor:cycDR} given in \cite{BT13} for the consistent case is dependent on the fact that $\Fix T_{[\sigma_1]}=\cap_{i=1}^N\Fix T_{i,i+1}\neq\emptyset$. Since $\Fix T_{i,i+1}\neq\emptyset$ if and only if $C_i\cap C_{i+1}\neq\emptyset$, in the inconsistent case one can only  guaranteed that $\Fix T_{[\sigma_1]}\supseteq\cap_{i=1}^N\Fix T_{i,i+1}=\emptyset$. \qede
\end{remark}

\begin{remark}[Approximating the difference vectors]
In Theorem~\ref{th:dichotmy}, it was shown that the sequences
 $$(x_n^{i+1}-x_n^i)_{n=1}^\infty,\quad (P_{C_{i+1}}R_{C_i}x_n^i-P_{C_i}x_n^i)_{n=1}^\infty,\quad
(P_{C_{i+1}}x_n^{i+1}-P_{C_i}x_n^i)_{n=1}^\infty,$$
converge (in norm) to $d^i$. The latter two are suitable if one is interested in approximating $d^i$ using a pair of points from $C_i$ and $C_{i+1}$.\qede
\end{remark}

\begin{remark}[Cyclic Douglas--Rachford as a favourable compromise]
 The behaviour of the cyclic Douglas--Rachford scheme is somewhere between that of the method of alternating projections and the classical Douglas--Rachford scheme. In this sense, it can be consider a comprise between the two schemes having some of the desirable properties of both. We elaborate.

Firstly, the cyclic Douglas--Rachford and classical Douglas--Rachford scheme perform the reflections with respect to the constraints sets, rather than using just a projection, as is the case of the method of cyclic projections. This can be seen as an advantage (at least heuristically). If a point is not contained in a particular constraint set, the reflection can potentially yield a strictly feasibility problem, where as projections produces point on the boundary (see Figure~\ref{fig:bdry}).

\begin{figure}[h]
 \begin{center}
 \begin{tikzpicture}[scale=0.7]
   \filldraw [green!30] plot [smooth cycle] coordinates {(0,0) (2,0.5) (3,3) (0,4) (-2,2)};
   \fill (-4,0) circle (0.05) node[below] {$x$};
   \draw (-1.6,1.3) -- (-4,0);
   \fill[->] (-1.6,1.3) circle (0.05) node[above]  {$P_Cx$};
   \fill[->] (0.8,2.6) circle (0.05) node[above] {$R_Cx$};
   \draw[dotted] (-1.6,1.3) -- (0.8,2.6);
   \draw (1,1) node {$C$};
 \end{tikzpicture}
 \end{center}
 \caption{$R_Cx$ is strictly feasible, while $P_Cx$ is on the boundary of $C$.}\label{fig:bdry}
\end{figure}
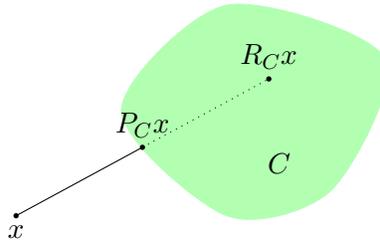

On one hand, the cyclic Douglas--Rachford and classical Douglas--Rachford iterations both proceed by applying a two set Douglas--Rachford mapping (i.e. one of the form $T_{i,i+1}$). In the consistent case, the limit obtained by both these schemes, once projected onto an appropriate constraint set, produces a solution to a feasibility problem. On the other, in the inconsistent case, the Douglas--Rachford scheme 
iterates are always unbounded. The behaviour described in Theorem~\ref{th:dichotmy} is much closer to that of the method of cyclic projections, described in Theorem~\ref{th:cycprojdich}.

 Thus,  if one wishes to diagnose infeasibility one might prefer Douglas-Rachford to the cyclic variant, but if one desires  an estimate even in the infeasible case one would likely opt for the cyclic variant. The behaviour of the three methods is illustrated in the two possible two sets cases in Figure~\ref{fig:dich}.~\qede
\end{remark}

\section{The Two Set Case}
We now specialize the results of Section~\ref{sec:dichotomy} for the case of problems having only two sets. Here the geometry of the problem is both better understood and more tractable. We introduce the following two sets
 $$E:=\{e\in C_1:d(e,C_1)=d(C_1,C_2)\},\quad F:=\{f\in C_2:d(f,C_1)=d(C_1,C_2)\}.$$
Further, the \emph{displacement vector}, $v$, is defined by
 $$v:=P_{\overline{C_2-C_1}}(0)=-P_{\overline{C_1-C_2}}(0).$$
We recall some useful facts.
\begin{fact}[Properties of $E,F$ and $v$]\label{fact:twosets}
 The following hold.
 \begin{enumerate}[(i)]
  \item If $C_1\cap C_2\neq\emptyset$ then $C_1\cap C_2=E=F$ and $v=0$.
  \item $E+v=F$, and $\|v\|=d(C_1,C_2)$ if and only if $C_2-C_1$ is closed.
  \item $E=\Fix Q_1=\Fix(P_{C_1}P_{C_2})$ and $F=\Fix Q_2=\Fix(P_{C_2}P_{C_1})$.
  \item $v=d^1=-d^2$.
 \end{enumerate}
\end{fact}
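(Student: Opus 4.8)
\textbf{Proof proposal for Fact~\ref{fact:twosets}.}

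The plan is to treat the four items essentially as a guided tour through the classical theory of best approximation between two convex sets, citing \cite{BBL97, BCL04, BC11} for the parts that are standard, and supplying the short linking arguments. First I would recall the basic object: the displacement vector $v$ is the element of least norm in $\overline{C_2-C_1}$, equivalently $v=P_{\overline{C_2-C_1}}(0)$, and by the symmetry $\overline{C_1-C_2}=-\overline{C_2-C_1}$ the two displayed formulas for $v$ agree. For (i), if $C_1\cap C_2\neq\emptyset$ pick $x\in C_1\cap C_2$; then $0=x-x\in C_2-C_1$, so $0$ is the least-norm element and $v=0$, whence $d(C_1,C_2)=0$. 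From $d(C_1,C_2)=0$ one gets, for $e\in C_1$, that $d(e,C_1)=0\le d(C_1,C_2)$ trivially but the defining condition $d(e,C_2)=d(C_1,C_2)=0$ forces $e\in\overline{C_2}=C_2$, so $E\subseteq C_1\cap C_2$; the reverse inclusion is immediate, and symmetrically $F=C_1\cap C_2$. (Here I should double-check the intended definition of $E,F$ — as written the paper has $d(e,C_1)$, which I read as a typo for $d(e,C_2)$, matching the standard definition of the ``nearest point'' sets; I would silently use the corrected version.)

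For (ii), the identity $E+v=F$ is the heart of the matter: given $e\in E$, one shows $e+v\in C_2$ and realizes the minimal distance. The clean way is via the characterization $e\in E$ iff $e\in C_1$ and $P_{C_2}e=e+v$ (and symmetrically $f\in F$ iff $P_{C_1}f=f-v$), which follows from the variational inequality for $P_{C_2}$ together with minimality of $\|v\|$ in $\overline{C_2-C_1}$; then $e\in E\Rightarrow e+v=P_{C_2}e\in C_2$ and a distance computation gives $e+v\in F$, with the reverse map $f\mapsto f-v$ showing it is a bijection. For the second clause of (ii), $\|v\|=d(C_1,C_2)$ always holds with $\overline{C_2-C_1}$ replaced by $C_2-C_1$ when the latter is closed, since then $v\in C_2-C_1$ so $v=f-e$ with $\|f-e\|=\|v\|=\inf\{\|c_2-c_1\|\}=d(C_1,C_2)$; conversely if $\|v\|=d(C_1,C_2)$ then $E,F$ are nonempty (a minimizing pair exists) and $C_2-C_1\supseteq F-E$ already contains an element of minimal norm, and a standard argument (e.g. \cite[Thm.~4.1]{BCL04}) upgrades this to closedness of $C_2-C_1$ — this equivalence is the one place I would lean most heavily on the literature rather than reprove it.

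For (iii), I would use Fact~\ref{fact:twosets}(ii) plus the firm nonexpansiveness of projections. If $e\in E$ then $P_{C_2}e=e+v\in F$ and then $P_{C_1}(e+v)$: using $e+v\in F$, $P_{C_1}(e+v)=(e+v)-v=e$, so $P_{C_1}P_{C_2}e=e$, i.e. $E\subseteq\Fix(P_{C_1}P_{C_2})$. Conversely, if $P_{C_1}P_{C_2}q=q$ then writing $q'=P_{C_2}q$ one has $P_{C_1}q'=q$, and $\|q-q'\|=d(q,C_2)=d(q',C_1)$; a short argument with the variational inequalities shows $q'-q$ is the least-norm element of $C_2-C_1$ hence $q'-q=v$ and $\|q-q'\|=d(C_1,C_2)$, i.e. $q\in E$. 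The statement for $F$ is symmetric. Finally (iv): in Theorem~\ref{th:dichotmy}, specialized to $N=2$, the difference vector $d^1=q^2-q^1$ where $q^1\in\Fix Q_1=E$ and $q^2=P_{C_2}q^1\in\Fix Q_2=F$; by (ii), $q^2=q^1+v$, so $d^1=v$, and $d^2=q^1-q^2=-v$ (using $C_0=C_2$, $q^1=P_{C_1}q^2$). Since the difference vectors are independent of the choice of $q^1$ this is consistent. The main obstacle is the closedness equivalence in (ii); everything else is bookkeeping with the variational characterizations of $P_{C_1},P_{C_2}$ and can be cited or dispatched in a line or two.
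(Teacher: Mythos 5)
Your handling of (i), (iii), (iv) and of the identity $E+v=F$ is correct and follows the standard variational-inequality arguments (the paper itself supplies no argument, simply citing \cite{BB93} and \cite[Fac.~5.2.2]{BBL97}, so you are reconstructing more than it records). You are also right that $d(e,C_1)$ in the definition of $E$ must be read as $d(e,C_2)$, and your computation showing that $q=P_{C_1}P_{C_2}q$ forces $P_{C_2}q-q$ to be the minimal-norm element of $\overline{C_2-C_1}$ is exactly the classical Cheney--Goldstein argument.

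The genuine gap is in the second clause of (ii). The equality $\|v\|=d(C_1,C_2)$ holds \emph{unconditionally}: $d(C_1,C_2)=\inf\{\|w\|:w\in C_2-C_1\}=d(0,C_2-C_1)=d\bigl(0,\overline{C_2-C_1}\bigr)=\|P_{\overline{C_2-C_1}}(0)\|=\|v\|$, because the distance to a set equals the distance to its closure. It therefore cannot be equivalent to closedness of $C_2-C_1$, and your converse step --- ``if $\|v\|=d(C_1,C_2)$ then $E,F$ are nonempty (a minimizing pair exists)'' --- is false: in the paper's own Figure~\ref{fig:dich}(A)--(C), with $C_1=\operatorname{epi}(1+1/\cdot)\cap(\mathbb R_+\times\mathbb R_+)$ and $C_2=\mathbb R\times\{0\}$, one has $\|v\|=d(C_1,C_2)=1$ while $E=F=\emptyset$ and $C_2-C_1$ is not closed. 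Nor does nonemptiness of $E,F$ imply closedness of $C_2-C_1$ (two closed convex sets can intersect while their difference fails to be closed, e.g.\ $C_1=\operatorname{epi}(e^{-(\cdot)})$ and $C_2=\mathbb R\times\,]-\infty,2]$ in the plane). The statement actually proved in \cite{BB93,BBL97} is: $E\neq\emptyset\iff F\neq\emptyset\iff v\in C_2-C_1$, and this holds \emph{in particular} when $C_2-C_1$ is closed --- a sufficient condition, not an equivalence. Your forward direction establishes exactly that sufficiency; the ``standard argument upgrading to closedness'' you appeal to does not exist. The fix is to replace the purported converse by the equivalence with $v\in C_2-C_1$, which drops out of your own characterization $e\in E\iff e\in C_1$ and $e+v=P_{C_2}e\in C_2$.
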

\begin{proof}
 See, for example, \cite[Sec.~1]{BB93} and \cite[Fac.~5.2.2]{BBL97}.
\end{proof}

\begin{figure}[h]
\begin{subfigure}[b]{0.3\textwidth}
\begin{center}
\begin{tikzpicture}[scale=1.25]
\draw [green!50, thick] (0.3,0) --(3,0);
\fill [blue!20] (0.3,2.95) -- plot[domain=0.5:3] (\x,{1/\x+0.1}) -- (3,2.95) -- cycle;
\draw (0.7,-0.2) node {$C_2$};
\draw (0.7,2.6) node {$C_1$};
\draw[->,black!50] (1.0000,0.5000)--(1.2533,0.8979);
\draw[->,black!50] (1.2533,0.8979)--(1.2533,0.0000);
\draw[->,black!50] (1.2533,0.0000)--(1.5586,0.7416);
\draw[->,black!50] (1.5586,0.7416)--(1.5586,0.0000);
\draw[->,black!50] (1.5586,0.0000)--(1.7706,0.6648);
\draw[->,black!50] (1.7706,0.6648)--(1.7706,0.0000);
\draw[->,black!50] (1.7706,0.0000)--(1.9353,0.6167);
\draw[->,black!50] (1.9353,0.6167)--(1.9353,0.0000);
\draw[->,black!50] (1.9353,0.0000)--(2.0712,0.5828);
\draw[->,black!50] (2.0712,0.5828)--(2.0712,0.0000);
\draw[->,black!50] (2.0712,0.0000)--(2.1876,0.5571);
\draw[->,black!50] (2.1876,0.5571)--(2.1876,0.0000);
\draw[->,black!50] (2.1876,0.0000)--(2.2899,0.5367);
\draw[->,black!50] (2.2899,0.5367)--(2.2899,0.0000);
\draw[->,black!50] (2.2899,0.0000)--(2.3816,0.5199);
\draw[->,black!50] (2.3816,0.5199)--(2.3816,0.0000);
\draw[->,black!50] (2.3816,0.0000)--(2.4648,0.5057);
\draw[->,black!50] (2.4648,0.5057)--(2.4648,0.0000);
\draw[->,black!50] (2.4648,0.0000)--(2.5412,0.4935);
\draw[->,black!50] (2.5412,0.4935)--(2.5412,0.0000);
\draw[->,densely dotted,black!50] (2.6,0.25)-- (3,0.25);
\draw[fill] (1.0000,0.5000) circle (0.02);
\draw (-0,-1.11) node {};
\end{tikzpicture}
\caption{MAP}
\end{center}
\end{subfigure}
\begin{subfigure}[b]{0.3\textwidth}
\begin{center}
\begin{tikzpicture}[scale=1.25]
\draw [green!50, thick] (0.3,0) --(3,0);
\fill [blue!20] (0.3,2.95) -- plot[domain=0.5:3] (\x,{1/\x+0.1}) -- (3,2.95) -- cycle;
\draw (0.7,-0.2) node {$C_2$};
\draw (0.7,2.6) node {$C_1$};
\draw[fill] (1.0000,0.5000) circle (0.02);
\draw[->,black!50] (1.0000,0.5000)--(1.2533,-0.3979);
\draw[->,densely dotted,black!50] (1.2533,-0.3979)--(1.6554,-1.1020);
\draw (-0,-1.11) node {};
\end{tikzpicture}
\caption{DR}
\end{center}
\end{subfigure}
\begin{subfigure}[b]{0.3\textwidth}
\begin{center}
\begin{tikzpicture}[scale=1.25]
\draw [green!50, thick] (0.3,0) --(3,0);
\fill [blue!20] (0.3,2.95) -- plot[domain=0.5:3] (\x,{1/\x+0.1}) -- (3,2.95) -- cycle;
\draw (0.7,-0.2) node {$C_2$};
\draw (0.7,2.6) node {$C_1$};
\draw[fill] (1.0000,0.5000) circle (0.02);
\draw[->,black!50] (1.0000,0.5000)--(1.2533,-0.3979);
\draw[->,black!50] (1.2533,-0.3979)--(1.4430,0.3951);
\draw[->,black!50] (1.4430,0.3951)--(1.5788,-0.3383);
\draw[->,black!50] (1.5788,-0.3383)--(1.6999,0.3500);
\draw[->,black!50] (1.6999,0.3500)--(1.7952,-0.3071);
\draw[->,black!50] (1.7952,-0.3071)--(1.8860,0.3231);
\draw[->,black!50] (1.8860,0.3231)--(1.9606,-0.2869);
\draw[->,black!50] (1.9606,-0.2869)--(2.0343,0.3047);
\draw[->,black!50] (2.0343,0.3047)--(2.0963,-0.2724);
\draw[->,black!50] (2.0963,-0.2724)--(2.1587,0.2909);
\draw[->,black!50] (2.1587,0.2909)--(2.2121,-0.2612);
\draw[->,black!50] (2.2121,-0.2612)--(2.2666,0.2800);
\draw[->,black!50] (2.2666,0.2800)--(2.3137,-0.2522);
\draw[->,black!50] (2.3137,-0.2522)--(2.3623,0.2711);
\draw[->,black!50] (2.3623,0.2711)--(2.4046,-0.2448);
\draw[->,black!50] (2.4046,-0.2448)--(2.4486,0.2637);
\draw[->,black!50] (2.4486,0.2637)--(2.4871,-0.2384);
\draw[->,black!50] (2.4871,-0.2384)--(2.5274,0.2572);
\draw[->,black!50] (2.5274,0.2572)--(2.5628,-0.2329);
\draw[->,black!50] (2.5628,-0.2329)--(2.6001,0.2517);
\draw (-0,-1.11) node {};
\end{tikzpicture}
\caption{Cyclic DR}
\end{center}
\end{subfigure}

\vspace{1ex}

\begin{subfigure}[b]{0.3\textwidth}
\begin{center}
\begin{tikzpicture}[scale=1.25]
\draw [green!50,thick] (-1.5,0) -- (1.5,0);
\fill [blue!20] (-1,2.55) -- plot[domain=-1.5:1.5] (\x,{\x*\x+0.3}) -- (1.5,2.55) -- cycle;
\draw (-1,-0.2) node {$C_2$};
\draw (-1,2.1) node {$C_1$};
\draw[->,black!50] (1.0000,0.5000)--(0.6689,0.7475);
\draw[->,black!50] (0.6689,0.7475)--(0.6689,0.0000);
\draw[->,black!50] (0.6689,0.0000)--(0.3598,0.4295);
\draw[->,black!50] (0.3598,0.4295)--(0.3598,0.0000);
\draw[->,black!50] (0.3598,0.0000)--(0.2128,0.3453);
\draw[->,black!50] (0.2128,0.3453)--(0.2128,0.0000);
\draw[->,black!50] (0.2128,0.0000)--(0.1303,0.3170);
\draw[->,densely dotted,black!50] (0.1303,0.3170)--(0.1303,0.0000);
\draw[->,densely dotted,black!50] (0.1303,0.0000)--(0.0808,0.3065);
\draw[->,densely dotted,black!50] (0.0808,0.3065)--(0.0808,0.0000);
\draw[->,densely dotted,black!50] (0.0808,0.0000)--(0.0503,0.3025);
\draw[->,densely dotted,black!50] (0.0503,0.3025)--(0.0503,0.0000);
\draw[->,densely dotted,black!50] (0.0503,0.0000)--(0.0314,0.3010);
\draw[->,densely dotted,black!50] (0.0314,0.3010)--(0.0314,0.0000);
\draw[->,densely dotted,black!50](0.0314,0.0000)--(0.0196,0.3004);
\draw[fill] (1.0000,0.5000) circle (0.02);
\draw[fill] (0,0.3) circle (0.02);
\draw[fill] (0,0) circle (0.02);
\draw (-0,-1) node {};
\end{tikzpicture}
\caption{MAP}
\end{center}
\end{subfigure}
\begin{subfigure}[b]{0.3\textwidth}
\begin{center}
\begin{tikzpicture}[scale=1.25]
\draw [green!50,thick] (-1.5,0) -- (1.5,0);
\fill [blue!20] (-1,2.55) -- plot[domain=-1.5:1.5] (\x,{\x*\x+0.3}) -- (1.5,2.55) -- cycle;
\draw (-1,-0.2) node {$C_2$};
\draw (-1,2.1) node {$C_1$};
\draw[->,black!50] (1.0000,0.5000)--(0.6689,-0.2475);
\draw[->,densely dotted,black!50] (0.6689,-0.2475)--(0.2948,-0.6344);
\draw[->,densely dotted,black!50] (0.2948,-0.6344)--(0.1020,-0.9448);
\draw (-0,-1) node {};
\end{tikzpicture}
\caption{DR}
\end{center}
\end{subfigure}
\begin{subfigure}[b]{0.3\textwidth}
\begin{center}
\begin{tikzpicture}[scale=1.25]
\draw [green!50,thick] (-1.5,0) -- (1.5,0);
\fill [blue!20] (-1,2.55) -- plot[domain=-1.5:1.5] (\x,{\x*\x+0.3}) -- (1.5,2.55) -- cycle;
\draw (-1,-0.2) node {$C_2$};
\draw (-1,2.1) node {$C_1$};
\draw[->,black!50] (1.0000,0.5000)--(0.6689,-0.2475);
\draw[->,black!50] (0.6689,-0.2475)--(0.4454,0.2509);
\draw[->,black!50] (0.4454,0.2509)--(0.3363,-0.1622);
\draw[->,black!50] (0.3363,-0.1622)--(0.2415,0.1962);
\draw[->,black!50] (0.2415,0.1962)--(0.1889,-0.1395);
\draw[->,black!50] (0.1889,-0.1395)--(0.1389,0.1798);
\draw[->,black!50] (0.1389,0.1798)--(0.1098,-0.1323);
\draw[->,densely dotted,black!50] (0.1098,-0.1323)--(0.0814,0.1744);
\draw[->,densely dotted,black!50] (0.0814,0.1744)--(0.0647,-0.1298);
\draw[->,densely dotted,black!50] (0.0647,-0.1298)--(0.0481,0.1725);
\draw[->,densely dotted,black!50] (0.0481,0.1725)--(0.0382,-0.1290);
\draw[->,densely dotted,black!50] (0.0382,-0.1290)--(0.0284,0.1718);
\draw[->,densely dotted,black!50] (0.0284,0.1718)--(0.0226,-0.1287);
\draw[->,densely dotted,black!50] (0.0226,-0.1287)--(0.0168,0.1716);
\draw[->,densely dotted,black!50] (0.0168,0.1716)--(0.0134,-0.1286);
\draw[->,densely dotted,black!50] (0.0134,-0.1286)--(0.0100,0.1715);
\draw[fill] (1.0000,0.5000) circle (0.02);
\draw[fill] (0.0047,-0.1285) circle (0.02);
\draw[fill] (0.0035,0.1715) circle (0.02);
\draw (-0,-1) node {};
\end{tikzpicture}
\caption{Cyclic DR}
\end{center}
\end{subfigure}
 \caption{Behaviour of the three methods starting with the same initial point. In (A)--(F), $C_2:=\mathbb R\times\{0\}$. In (A)--(C), $C_1:=\operatorname{epi}(1+1/\cdot)\cap(\mathbb R_+\times\mathbb R_+)$ and $E,F$ are empty. In (D)--(F), $C_1:=\operatorname{epi}(1+(\cdot)^2)$ and $E,F$ are nonempty.
 }\label{fig:dich}
\end{figure}
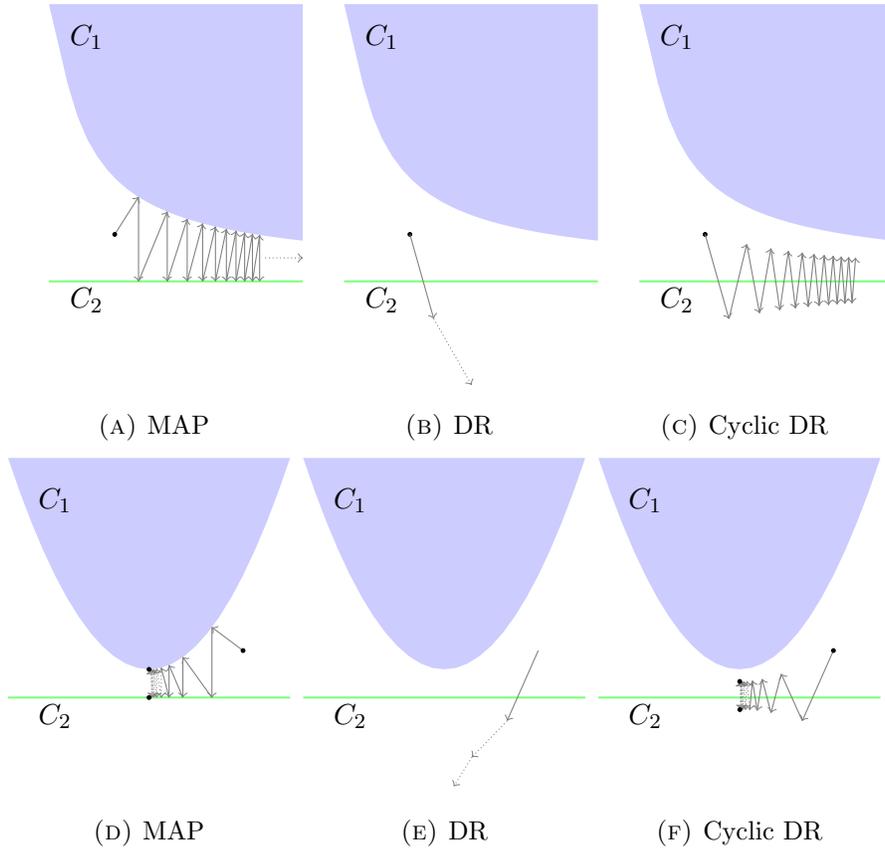

We are now ready to specialize the conclusions of Theorem~\ref{th:dichotmy}. In particular, we show that the cyclic Douglas--Rachford scheme can can be used to find best approximation pairs, provided they exist.
\begin{theorem}[Alternating Douglas--Rachford dichotomy]\label{th:twosetdichotomy}
The following holds.
\begin{enumerate}[(a)]
 \item We have
  \begin{align*}
   P_{C_{2}}R_{C_1}x_n^1-P_{C_{2}}x_n^1=(x_n^{2}-x_n^1)-(P_{C_{2}}x_n^{2}-P_{C_1}x_n^1) &\to 0,\\
  P_{C_{1}}R_{C_2}x_n^2-P_{C_{1}}x_n^2=(x_n^{1}-x_n^2)-(P_{C_{1}}x_n^{1}-P_{C_2}x_n^2)&\to 0.
  \end{align*}
 \item Exactly one of the following alternatives hold.
  \begin{enumerate}[(i)]
   \item $E,F,\Fix T_{[C_1,C_2]},\Fix T_{[C_2,C_1]}=\emptyset$. Then $\|x_n^1\|,\|x_n^2\|\to+\infty$.
   \item $E,F,\Fix T_{[C_1,C_2]},\Fix T_{[C_2,C_1]}\neq\emptyset$. Then
   $$x_n^1\wto x^{1}\in\Fix T_{[C_1,C_2]},\quad x_n^2\wto x^{2}\in\Fix T_{[C_2,C_1]},$$
   with $x^{2}=T_{C_1,C_2}x^{1}$ and $x^{1}=T_{C_2,C_1}x^{2}.$
   Furthermore, 
    \begin{align*}
      &x_n^{2}-x_n^1=P_{C_{2}}R_{C_1}x_n^1-P_{C_1}x_n^1\to v,&& P_{C_{2}}x_n^{2}-P_{C_1}x_n^1\to v,\\
      &x_{n+1}^1-x_n^2=P_{C_{1}}R_{C_2}x_n^2-P_{C_2}x_n^2\to -v,&& P_{C_{1}}x_{n+1}^{1}-P_{C_2}x_n^2\to -v,
    \end{align*}
    and $x^{2}-x^1=P_{C_{2}}x^{2}-P_{C_1}x^1=v$. In particular, 
    \begin{align*}
    & P_{C_{1}}R_{C_2}x^2=P_{C_{1}}x^{1}\in E,&&P_{C_{2}}R_{C_1}x^1=P_{C_{2}}x^{2}\in F.
    \end{align*}
  \end{enumerate}
  \end{enumerate}
\end{theorem}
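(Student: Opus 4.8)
The plan is to obtain Theorem~\ref{th:twosetdichotomy} as the specialization of the general dichotomy Theorem~\ref{th:dichotmy} to the case $N=2$, using Fact~\ref{fact:twosets} to translate the abstract objects ($\Fix T_{[\sigma_i]}$, difference vectors $d^i$, the projected limit points) into the concrete geometric language of $E$, $F$ and the displacement vector $v$. For $N=2$ the indices cycle as $1,2,1,2,\dots$, so $T_{[\sigma_1]}=T_{C_1,C_2}$ and $T_{[\sigma_2]}=T_{C_2,C_1}$, and the cyclic Douglas--Rachford scheme is exactly the alternating Douglas--Rachford scheme. Part (a) is then just the $i=1,2$ instances of Theorem~\ref{th:dichotmy}(a), noting that for $N=2$ we have $C_3=C_1$ and the update $x_{n+1}^1=x_n^3=T_{2,1}x_n^2$, so the second displayed limit of (a) is the ``$i=2$'' line with $x_n^{i+1}$ read as $x_{n+1}^1$.

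For part (b) I would argue as follows. By Theorem~\ref{th:dichotmy}(b), exactly one of two alternatives holds: either both $\Fix T_{[C_1,C_2]}$ and $\Fix T_{[C_2,C_1]}$ are empty, with $\|x_n^1\|,\|x_n^2\|\to+\infty$; or both are nonempty. To finish case (i) I must add $E=F=\emptyset$ to the list, and for case (ii) I must add $E,F\neq\emptyset$. This is where Fact~\ref{fact:twosets} does the work: part (iii) of that fact identifies $E=\Fix(P_{C_1}P_{C_2})=\Fix Q_1$ and $F=\Fix(P_{C_2}P_{C_1})=\Fix Q_2$, and by Lemma~\ref{lem:bounded} (equivalence of boundedness of the $x$- and $y$-sequences) together with Theorem~\ref{th:cycprojdich}, the cyclic projection fixed-point sets $\Fix Q_i$ are nonempty precisely when the Douglas--Rachford sequences are bounded, i.e. precisely when $\Fix T_{[\sigma_i]}\neq\emptyset$ (Lemma~\ref{lem:fixedpoints}). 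Hence all four sets $E,F,\Fix T_{[C_1,C_2]},\Fix T_{[C_2,C_1]}$ are simultaneously empty or simultaneously nonempty, establishing the two alternatives.

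In case (ii), the weak convergence $x_n^1\wto x^1\in\Fix T_{[C_1,C_2]}$, $x_n^2\wto x^2\in\Fix T_{[C_2,C_1]}$ with $x^2=T_{C_1,C_2}x^1$, $x^1=T_{C_2,C_1}x^2$ is immediate from Theorem~\ref{th:dichotmy}(b)(ii) with $N=2$. The difference-vector limits follow by substituting $d^1=v$ and $d^2=-v$ (Fact~\ref{fact:twosets}(iv)) into the displayed limits of Theorem~\ref{th:dichotmy}(b)(ii); again the ``$i=2$'' line reads $x_{n+1}^1-x_n^2\to d^2=-v$ and $P_{C_1}x_{n+1}^1-P_{C_2}x_n^2\to-v$. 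Finally, for the membership statements: from Theorem~\ref{th:dichotmy}(b)(ii) we have $P_{C_1}R_{C_2}x^2=P_{C_1}x^1$ and $P_{C_2}R_{C_1}x^1=P_{C_2}x^2$, and $x^2-x^1=P_{C_2}x^2-P_{C_1}x^1=v$. It remains to see $P_{C_1}x^1\in E$ and $P_{C_2}x^2\in F$; since $x^2=T_{C_1,C_2}x^1$, identity \eqref{eq:TABx} gives $P_{C_2}x^2=P_{C_2}R_{C_1}x^1$, and one computes (using $R_{C_1}x^1=2P_{C_1}x^1-x^1$ and $x^2-x^1=v$) that $P_{C_2}x^2=P_{C_2}(P_{C_1}x^1+v)$, while $P_{C_1}x^1+v$ is already the image of $P_{C_1}x^1\in C_1$ shifted by $v$; invoking $\Fix(P_{C_2}P_{C_1})=F$ and $\Fix(P_{C_1}P_{C_2})=E$ together with the relation $P_{C_1}P_{C_2}(P_{C_1}x^1)=P_{C_1}x^1$ (which follows from the two projection identities plus $E+v=F$ of Fact~\ref{fact:twosets}(ii)) yields $P_{C_1}x^1\in E$, and symmetrically $P_{C_2}x^2\in F$.

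The main obstacle I anticipate is the last step: cleanly verifying that the projected limit points $P_{C_1}x^1$ and $P_{C_2}x^2$ land in $E$ and $F$ rather than merely in $C_1$ and $C_2$. This requires chaining the fixed-point relation $P_{C_2}R_{C_1}x^1=P_{C_2}x^2$ with $x^2-x^1=v$ to show $(P_{C_1}x^1,P_{C_2}x^2)$ is a fixed pair of the alternating-projection map, and then appealing to Fact~\ref{fact:twosets}(iii). Everything else is bookkeeping: rewriting $N=2$ indices correctly and substituting $d^1=v=-d^2$.
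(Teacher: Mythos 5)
Your proposal takes exactly the paper's route --- the paper's entire proof is the one-line citation ``Follows from Theorem~\ref{th:dichotmy} and Fact~\ref{fact:twosets}'' --- and your elaboration of the $N=2$ bookkeeping (reading $x_n^3$ as $x_{n+1}^1$, substituting $d^1=v=-d^2$, and chaining Lemmas~\ref{lem:fixedpoints} and \ref{lem:bounded} with Theorem~\ref{th:cycprojdich} and Fact~\ref{fact:twosets}(iii) to make all four sets simultaneously empty or nonempty) is correct. Only remark: for the final memberships your fixed-point detour is more work than needed, since $P_{C_1}x^1\in C_1$, $P_{C_2}x^2\in C_2$ and $P_{C_2}x^2-P_{C_1}x^1=v$ with $\|v\|=d(C_1,C_2)$ already make $(P_{C_1}x^1,P_{C_2}x^2)$ a best approximation pair, hence an element of $E\times F$ directly from the definitions.
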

\begin{proof}
 Follows from Theorem~\ref{th:dichotmy} and Fact~\ref{fact:twosets}.
\end{proof}

Contrast Theorem~\ref{th:twosetdichotomy} with its analogues for  cyclic projections (Theorem~\ref{th:cycprojdich}) and for the classical Douglas--Rachford scheme (Theorem~\ref{th:drdichotomy}), which we state below for completeness.

\begin{theorem}[Douglas--Rachford method dichotomy]\label{th:drdichotomy}
Let $C_1,C_2\subseteq \H$ be closed and convex. Let $z_0\in\mathcal H$ and set $z_{n+1}:=T_{C_1,C_2}z_n$. Then
 \begin{enumerate}[(a)]
  \item $z_{n+1}-z_n=P_{C_2}R_{C_1}z_n-P_{C_1}z_n\to v$ and $P_{C_2}P_{C_1}z_n-P_{C_1}z_n\to v$.
  \item  Exactly one of the following alternatives holds.
   \begin{enumerate}[(i)]
    \item $C_1\cap C_2\neq\emptyset$ and $(z_n)_{n=1}^\infty$ converges weakly to a point in $$\Fix T_{C_1,C_2}=(C_1\cap C_2)+N_{\overline{C_1-C_2}}(0).$$
    \item $C_1\cap C_2=\emptyset$ and $\|z_n\|\to+\infty$.
   \end{enumerate}
  \item Exactly one of the following two alternatives holds.
  \begin{enumerate}[(i)]
   \item $E,F=\emptyset$, $\|P_{C_1}z_n\|\to+\infty$ and $\|P_{C_2}P_{C_1}z_n\|\to+\infty$;
   \item $E,F\neq\emptyset$, $(P_{C_1}z_n)_{n=1}^\infty$ and $(P_{C_2}P_{C_1}z_n)_{n=1}^\infty$ are bounded with weak cluster points in $E$ and $F$, respectively. Furthermore, the weak cluster points of
     $$((P_{C_1}z_n,P_{C_2}R_{C_1}z_n))_{n=1}^\infty \text{ and }  ((P_{C_1}z_n,P_{C_2}P_{C_1}z_n))_{n=1}^\infty$$
   are best approximation pairs relative to $(C_1,C_2)$.
  \end{enumerate}
\end{enumerate}
\end{theorem}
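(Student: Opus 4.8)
The plan is to read Theorem~\ref{th:drdichotomy} as an assembly of standard facts about the two-set Douglas--Rachford operator $T:=T_{C_1,C_2}$, which is firmly nonexpansive (equivalently, $1/2$-averaged) by Lemma~\ref{lem:averaged}(a). First I would record the algebraic identity obtained by expanding $Tz_n=\tfrac12(z_n+R_{C_2}R_{C_1}z_n)$ and substituting $R_{C_i}=2P_{C_i}-I$, exactly as in the derivation of \eqref{eq:TABxn}: this gives $z_{n+1}-z_n=P_{C_2}R_{C_1}z_n-P_{C_1}z_n=-(I-T)z_n$, which is the identity asserted in (a). I would also note that, by definition of the displacement vector, $\|v\|=d(0,\overline{C_2-C_1})=d(C_1,C_2)$.

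For (a) I would invoke the classical asymptotics of iterates of a firmly nonexpansive map: $(I-T)z_n=z_n-z_{n+1}$ converges in norm to the minimal-norm element of $\overline{\operatorname{ran}(I-T)}$ (this rests on Fact~\ref{fact:nonexpansive}(e) together with a monotonicity/demiclosedness argument; see \cite{BC11}), combined with the identification $\overline{\operatorname{ran}(I-T_{C_1,C_2})}=\overline{C_1-C_2}$. Hence $z_n-z_{n+1}\to P_{\overline{C_1-C_2}}(0)=-v$, i.e. $z_{n+1}-z_n\to v$. For the companion limit I would use a squeeze: $\|P_{C_2}P_{C_1}z_n-P_{C_1}z_n\|=d(P_{C_1}z_n,C_2)\le\|P_{C_1}z_n-P_{C_2}R_{C_1}z_n\|=\|z_{n+1}-z_n\|\to\|v\|$, while $d(P_{C_1}z_n,C_2)\ge d(C_1,C_2)=\|v\|$; since each vector $P_{C_2}P_{C_1}z_n-P_{C_1}z_n$ lies in the closed convex set $\overline{C_2-C_1}$ and the norms converge to $d(0,\overline{C_2-C_1})$, the parallelogram law forces norm convergence to $P_{\overline{C_2-C_1}}(0)=v$.

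Part (b) I would prove directly from the earlier facts. First, $\Fix T_{C_1,C_2}\neq\emptyset\iff C_1\cap C_2\neq\emptyset$: the converse is immediate, and if $z^\ast\in\Fix T$ then $z^\ast=R_{C_2}R_{C_1}z^\ast$, so with $p:=P_{C_1}z^\ast$ one computes $P_{C_2}R_{C_1}z^\ast=\tfrac12(R_{C_1}z^\ast+z^\ast)=p$, whence $p\in C_1\cap C_2$. If $C_1\cap C_2\neq\emptyset$ then $\Fix T\neq\emptyset$ and Theorem~\ref{th:averagedcvgt} (with $m=1$) yields weak convergence of $(z_n)$ to a fixed point, while the displayed formula for $\Fix T_{C_1,C_2}$ is standard \cite{BC11}. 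If $C_1\cap C_2=\emptyset$ then $\Fix T=\emptyset$, and I would argue $\|z_n\|\to+\infty$ as follows: $\|z_{n+1}-z_n\|$ is non-increasing (by nonexpansiveness) with some limit $\ell$, and $\tfrac1n\|z_n\|\to\ell$ by Fact~\ref{fact:nonexpansive}(e); a bounded subsequence of $(z_n)$ would force $\ell=0$, hence $z_n-Tz_n\to0$, hence---by demiclosedness of $I-T$, Fact~\ref{fact:nonexpansive}(d)---a weak cluster point lying in $\Fix T$, a contradiction; so no subsequence of $(z_n)$ is bounded.

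Part (c) is the substantive half; it is essentially the classical analysis of the inconsistent Douglas--Rachford iteration (see \cite{BC11}, and \cite{BB93} for the two-set geometry summarized in Fact~\ref{fact:twosets}). The part that follows readily from (a): the three sequences $(P_{C_1}z_n)$, $(P_{C_2}R_{C_1}z_n)$ and $(P_{C_2}P_{C_1}z_n)$ differ pairwise by norm-convergent sequences, so boundedness of one implies boundedness of all; and if, along a subsequence, $P_{C_1}z_n\wto e$ and $P_{C_2}R_{C_1}z_n\wto g$ (resp. $P_{C_2}P_{C_1}z_n\wto g$), then $e\in C_1$, $g\in C_2$, and $g-e=v$ (since $P_{C_2}R_{C_1}z_n-P_{C_1}z_n=z_{n+1}-z_n\to v$; resp. since $P_{C_2}P_{C_1}z_n-P_{C_1}z_n\to v$), so $\|e-g\|=\|v\|=d(C_1,C_2)$ and $(e,g)$ is a best approximation pair; in particular $d(e,C_2)=d(g,C_1)=d(C_1,C_2)$, so $e\in E$ and $g\in F$. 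The remaining, genuinely delicate points---that $E\neq\emptyset$ forces $(P_{C_1}z_n)$ to be bounded (a quasi-Fej\'er estimate relative to a best approximation pair) and, conversely, that $E=\emptyset$ forces $\|P_{C_1}z_n\|\to+\infty$ rather than mere unboundedness, including the subtleties arising when $C_1-C_2$ is not closed---I would import from \cite{BC11}. The main obstacle is thus not an isolated trick but faithfully reproducing that last bookkeeping; since the theorem is included only for comparison with Theorem~\ref{th:twosetdichotomy}, I would present the skeleton above with precise references rather than a self-contained redevelopment.
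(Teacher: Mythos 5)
Your proposal is correct in what it establishes, but it takes a genuinely different route from the paper: the paper's entire proof of Theorem~\ref{th:drdichotomy} is the citation ``See \cite[Th.~3.13]{BCL04}'' (the result is included only for comparison with Theorem~\ref{th:twosetdichotomy}), whereas you partially reconstruct the argument. Your reconstruction of (a) and (b) is sound, and your fixed-point computation ($z^\ast\in\Fix T\Rightarrow P_{C_1}z^\ast=P_{C_2}R_{C_1}z^\ast\in C_1\cap C_2$), the parallelogram-law squeeze for $P_{C_2}P_{C_1}z_n-P_{C_1}z_n\to v$, and the unboundedness argument via Fact~\ref{fact:nonexpansive}(d)--(e) are all correct and match the standard proofs. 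Two imported ingredients deserve to be flagged as genuine lemmas rather than routine observations: first, the strong convergence of $z_n-z_{n+1}$ to the minimal-norm element of $\overline{\operatorname{ran}(I-T)}$ is strictly stronger than Fact~\ref{fact:nonexpansive}(e) as stated in the paper (which only equates limits of \emph{norms}); it does hold for firmly nonexpansive maps and is in \cite{BBR78}, but it needs its own citation. Second, the identification $\overline{\operatorname{ran}(I-T_{C_1,C_2})}=\overline{C_1-C_2}$ is nontrivial --- only the inclusion $\operatorname{ran}(I-T)\subseteq C_1-C_2$ follows from your identity in (a); the reverse inclusion is \cite[Cor.~2.7]{BCL04}. (Incidentally, your use of $\|v\|=d(0,\overline{C_2-C_1})=d(C_1,C_2)$ unconditionally is correct, even though Fact~\ref{fact:twosets}(ii) as printed attaches a closedness hypothesis to it; the closedness of $C_2-C_1$ is relevant to whether $v$ is \emph{attained} as a difference, i.e.\ to whether $E,F\neq\emptyset$, not to the value of $\|v\|$.) For part (c) you correctly isolate the easy consequences of (a) and honestly defer the substantive halves (that $E\neq\emptyset$ forces boundedness of $(P_{C_1}z_n)$, and that $E=\emptyset$ forces $\|P_{C_1}z_n\|\to+\infty$) to the literature --- exactly the parts the paper also leaves to \cite{BCL04}. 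What your approach buys is a self-contained derivation of (a) and (b) from facts already assembled in Section~1; what it costs is that the proof of the theorem's most quotable clause, (c)(ii), still ultimately rests on the same external reference as the paper's one-line proof.
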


\begin{proof}
 See \cite[Th.~3.13]{BCL04}.
\end{proof}

\section{Final Remarks}
We have analysed the behaviour of the cyclic Douglas--Rachford algorithm for finding a point in the intersection of a finite number of closed convex sets. Whilst each iteration of the the method is similar to that of the classical Douglas--Rachford scheme, its behaviour, particularly in the inconsistent case,  is closer the that of the method of cyclic projections. With this in mind, one might consider the cyclic Douglas--Rachford scheme as a useful comprise between these methods.

Applied to two-set feasibility problems for which best approximation pairs exist, the cyclic Douglas--Rachford method produces a pair of points which when projected onto the appropriate sets yields a best approximation pair. This is important for applications in which consistency of the feasibility problem is not known \emph{a priori}.

Finally we finish with two open questions:
\begin{enumerate}
\item Can one prove a version of the main result in \cite{BCC12},  showing that for cyclic Douglas-Rachford applied to three sets there is no variational characterization of the fixed point sets Fix $T_{[\sigma_i]}$ being nonempty?
    \item What can be said about convergence rates for the Douglas-Rachford methods? Much less seems known than in the case of alternating projections. Recent linear convergence results for the classical Douglas--Rachford method applied to affine subspaces can be found in \cite{BCPW13}.
    \end{enumerate}

\subsection*{Acknowledgements} The authors would like to thank Liangjin Yao for his helpful suggestions.

\footnotesize

\end{document}